\numberwithin{equation}{section}
\newtheorem{Proposition}[equation]{Proposition}
\newtheorem{Lemma}[equation]{Lemma}
\newtheorem{Theorem}[equation]{Theorem}
\newtheorem{Corollary}[equation]{Corollary}
\theoremstyle{definition}  %% makes all of the theorem environments which follow appear in \rm
\newtheorem{Definition}[equation]{Definition}
\newtheorem{Remark}[equation]{Remark}
\newcommand\Comment[2][\relax]{\space\par\medskip\noindent%
   \fbox{\begin{minipage}{\textwidth}\textbf{Comment\ifx\relax#1\else---#1\fi}\newline%
        #2\end{minipage}}\medskip
}
\def\b1{\text{\boldmath$1$}}
\def\fa{\mathfrak{a}}
\def\a{\mathfrak{a}}
\def\m{\mathfrak{m}}
\def\pmod#1{\text{ }(\text{\rm mod } #1)\,}
\newcommand{\End}{\operatorname{End}}
\newcommand{\Z}{\mathbb{Z}}
\newcommand{\0}{{\bar 0}}
\renewcommand{\1}{{\bar 1}}
\def\eps{{\varepsilon}}
\def\phi{{\varphi}}
\newcommand{\funF}{{\mathcal F}}
\newcommand{\la}{\lambda}
\newcommand{\Om}{\Omega}
\newcommand{\de}{\delta}
\newcommand{\De}{\Delta}
\newcommand{\rad}{{\mathrm {rad}\,}}
\renewcommand{\mod}{\bmod \,}
\newcommand{\EZig}{Z}
\def\col{{\operatorname{col}}}
\def\b{\mathfrak{b}}
\def\k{\Bbbk}
\def\spa{\operatorname{span}}
\def\op{{\mathrm{op}}}
\def\mod#1{#1\!\operatorname{-mod}}
\def\iso{\stackrel{\sim}{\longrightarrow}}
  \gdef\set#1{\mathinner{\lbrace\,{\mathcode`\|"8000%
  \let|\midvert #1}\,\rbrace}}
\def\midvert{\egroup\mid\bgroup}
\colorlet{darkgreen}{green!50!black}
\tikzset{dots/.style={very thick,loosely dotted},
         greendot/.style={fill,circle,color=darkgreen,inner sep=1.5pt,outer sep=0},
         blackdot/.style={fill,circle,color=black,inner sep=1.5pt,outer sep=0},
         graydot/.style={fill,circle,color=gray,inner sep=1.1pt,outer sep=0}
}
\def\greendot(#1,#2){\node[greendot] at(#1,#2){}}
\def\blackdot(#1,#2){\node[blackdot] at(#1,#2){}}
\def\graydot(#1,#2){\node[graydot] at(#1,#2){}}
\newenvironment{braid}{% sets defaults for the braid diagrams
  \begin{tikzpicture}[baseline=6mm,black,line width=1pt, scale=0.32,
                      draw/.append style={rounded corners},
                      every node/.append style={font=\fontsize{5}{5}\selectfont}]%
  }{\end{tikzpicture}
}
\def\Grid(#1,#2){%  draws a coordinate grid inside a braid diagram
  \draw[very thin,gray,step=2mm] (0,0)grid(#1,#2);
  \draw[very thin,darkgreen,step=10mm] (0,0)grid(#1,#2);
}
\newcommand\Tableau[2][\relax]{
  \begin{tikzpicture}[scale=0.5,draw/.append style={thick,black}]
    \ifx\relax#1\relax%
    \else % shade the boxes in #1
      \foreach\box in {#1} { \filldraw[blue!30]\box+(-.5,-.5)rectangle++(.5,.5); }
    \fi
    \newcount\row\newcount\col
    \row=0
    \foreach \Row in {#2} {
       \col=1
       \foreach\k in \Row {
          \draw(\the\col,\the\row)+(-.5,-.5)rectangle++(.5,.5);
          \draw(\the\col,\the\row)node{\k};
          \global\advance\col by 1
       }
       \global\advance\row by -1
    }
  \end{tikzpicture}
}
\newcommand\YoungDiagram[2][\relax]{
  \begin{tikzpicture}[scale=0.5,draw/.append style={thick,black}]
    \ifx\relax#1\relax%
    \else % shade the boxes in #1
    \foreach\box in {#1} {
      \filldraw[blue!30]\box rectangle ++(1,1);
    }
    \fi
    \newcount\row
    \row=0
    \foreach \col in {#2} {
       \draw(1,\the\row)grid ++(\col,1);
       \global\advance\row by -1
    }
  \end{tikzpicture}
}
\begin{document}

\title[Based quasi-hereditary algebras]{{\bf Based quasi-hereditary algebras}}

\author{\sc Alexander Kleshchev}
\address{Department of Mathematics\\ University of Oregon\\
Eugene\\ OR 97403, USA}
\email{klesh@uoregon.edu}

\author{\sc Robert Muth}
\address{Department of Mathematics\\ Washington \& Jefferson College
\\
Washington\\ PA 15301, USA}
\email{rmuth@washjeff.edu}

\subjclass[2010]{16G30, 16E99}

\thanks{The first author was supported by the NSF grant No. DMS-1700905 and the DFG Mercator program through the University of Stuttgart. This work was also supported by the NSF under grant No. DMS-1440140 while both authors were in residence at the MSRI during the Spring 2018 semester. Additional support was provided by the National University of Singapore IMS, where both authors were in residence for the Representation Theory of Symmetric Groups and Related Algebras workshop, December 11--20, 2017.
}

\begin{abstract}
A notion of a split quasi-hereditary algebra has been defined by Cline, Parshall and Scott. Du and Rui  describe a based approach to split quasi-hereditary algebras. 
We develop this approach further to show that over a complete local Noetherian ring, one can achieve even stronger basis properties. This is important for {\em `schurifying'} quasi-hereditary algebras as developed in our subsequent work. The schurification procedure associates to an algebra $A$ a new algebra, which is the classical Schur algebra if $A$ is a field. Schurification produces interesting new quasi-hereditary and cellular algebras. It is important to work over an integral domain of characteristic zero, taking into account a super-structure on the input algebra $A$. So we pay attention to   super-structures on quasi-hereditary algebras and investigate a subtle {\em conforming} property of heredity data which is crucial to guarantee that the schurification of $A$ is quasi-hereditary if so is $A$. 
We establish a Morita equivalence result which allows us to pass to basic quasi-hereditary algebras {\em preserving conformity}. 

\end{abstract}

\maketitle

\section{Introduction}

Working over an arbitrary ground field, Cline, Parshall and Scott \cite{CPSCrelle} axiomatized the notion of a {\em highest weight category} and defined {\em quasi-hereditary algebras}. However, it is important to be able to work more generally over a reasonable commutative ring. This was pursued in \cite{CPS,DS,DR,Ro}. In particular, if $\k$ is a Noetherian ground ring, a notion of a split quasi-hereditary algebra has been defined in \cite{CPS}, cf. also \cite{Ro}. On the other hand, Du and Rui \cite{DR} described a based approach to split quasi-hereditary algebras, showing that it is equivalent to that of \cite{CPS} provided that $\k$ is Noetherian and local. 

The goal of this paper is to develop Du and Rui's approach further to show that over a complete local Noetherian ring, we can achieve even stronger basis properties, see Definition~\ref{DCC}. This is important for {\em `schurifying'} quasi-hereditary algebras as developed in \cite{greenThree}. 
The schurification procedure associates to a $\k$-algebra $A$ (with suitable subalgebra \(\fa\)) a new algebra $T^A_\fa(n,d)$, which is the classical Schur algebra if $A=\k$. Schurification often produces interesting new quasi-hereditary and cellular algebras which are important in representation theory of symmetric groups, Hecke algebras, classical Schur algebras, etc., see e.g. \cite{T,EK1,EK2}. 

It is clear from \cite{EK1,EK2} that to define many interesting quasi-hereditary algebras, it is important to work over an integral domain of characteristic zero, taking into account a super-structure on the input algebra $A$. Therefore we pay attention to   super-structures (as well as  $\Z$-gradings) on quasi-hereditary algebras. We investigate a subtle {\em conforming} property of heredity data, see Definition~\ref{D290517}. This is non-trivial only if the super-structure is non-trivial, and is crucial to guarantee that $T^A_\fa(n,d)$ is quasi-hereditary if $A$ is quasi-hereditary. 

We further establish some Morita equivalence results which sometimes allow us to pass to basic (or almost basic) quasi-hereditary algebras {\em preserving conformity}, see Theorem~\ref{MorBasic}. This is crucial for studying decomposition numbers and other properties of $T^A_\fa(n,d)$, see for example \cite{greenThree}. 

\section{Based quasi-hereditary algebras}\label{SQHA}

Throughout the paper $\k$ is always a commutative unital ring. Sometimes we will require more in which case this will be stated explicitly. 

\subsection{Algebras and modules}
%The rings $\k$ and $\O$ are as in \cite{EK1}. We work mostly over $\O$, and in fact may assume $\O=\Z$. 

Let $V$ be a {\em graded $\k$-supermodule}, i.e. $V$ is endowed with a $\k$-module decomposition  
$$V=\bigoplus_{n\in\Z,\,\eps\in\Z/2}V^n_\eps.
$$ 
We set $V^n:=V^n_\0\oplus V^n_\1$ and $V_\eps:=\bigoplus_{n\in\Z}V^n_\eps$. 
Then $V=\bigoplus_{n\in\Z} V^n$ is a grading, and $V=V_\0\oplus V_\1$ is a superstructure. For $v\in V_\eps$, we write $\bar v:=\eps$. 
Of course, the grading and/or the superstructure could be trivial, for example we could have $V=V^0_\0$.  

An element $v\in V$ is called homogeneous if $v\in V_\eps^m$ for some $\eps$ and $m$. We denote by $V_{\rm hom}$ the set of all non-zero homogeneous elements of $V$. For a subset $S\subseteq V_{\rm hom}$ and $\eps\in\Z/2$ 
%and $m\in\Z$, 
we denote 
\begin{equation}\label{EH}
S_\eps:=S\cap  V_\eps.
\end{equation}

A map $f:V\to W$ of graded $\k$-supermodules is called {\em homogeneous} if $f(V^m_\eps)\subseteq W^m_\eps$ for all $m$ and $\eps$. 
Let
\begin{equation}\label{ER}
R:=\Z[q,q^{-1}][t]/(t^2-1), 
\end{equation} 
and denote the image of $t$ in the quotient ring by $\pi$, so that $\pi^\eps$ makes sense for $\eps\in\Z/2$. 
For $v\in V^n_\eps$, we write 
\begin{equation}\label{EDeg}
\deg(v):=q^n\pi^\eps.
\end{equation}

For a free $\k$-module $W$ of finite rank $d$, we write $d=\dim W$. A graded $\k$-supermodule $V$ is free of finite rank if each $V^n_\eps$ is free of finite rank and we have $V^n=0$ for almost all $n$. Let $V$ be a free graded $\k$-supermodule of finite rank. 
A {\em homogeneous basis} of $V$ is a $\k$-basis all of whose elements are homogeneous. The {\em graded dimension} of $V$ is 
$$
\dim^q_\pi V:=\sum_{n\in\Z,\, \eps\in\Z/2}(\dim V^n_\eps)q^n\pi^\eps\in R.
$$
   
A (not necessarily unital) $\k$-algebra $A$ is called a {\em graded $\k$-superalgebra}, if $A$ is a graded $\k$-supermodule and $A_{\eps}^nA_{\de}^m\subseteq A_{\eps+\de}^{n+m}$ for all $\eps,\de$ and $n,m$.  
%We will only consider unital $\k$-algebras, which are free of finite rank as graded $\k$-supermodules. 
By a {\em graded $A$-supermodule} we understand an  $A$-module $V$ which is a graded $\k$-supermodule and 
%with a decomposition $V=\bigoplus_{n\in\Z,\,\eps\in\Z/2}V^n_\eps$ as a $\k$-module, 
%, which on restriction to $\k$ is a free graded $\k$-supermodule of finite rank 
%such that 
$A_{\eps}^nV_{\de}^m\subseteq V_{\eps+\de}^{n+m}$ for all $\eps,\de$ and $n,m$. 
We denote by $\mod{A}$ the category of all finitely generated graded $A$-supermodules and homogeneous $A$-homomorphisms. All ideals, subalgebras, submodules, etc. are assumed to be homogeneous. In particular the Jacobson ideal $J(A)$ is the intersection of the annihilators of all graded simple $A$-supermodules. 

Given a graded $A$-supermodule $V$, $n\in \Z$ and $\eps\in\Z/2\Z$, we denote by $q^n\pi^\eps V$ the graded $A$-supermodule which is the same as $V$ as an $A$-module but with $(q^n\pi^\eps V)^m_\de=V^{m-n}_{\de+\eps}$.

%Throughout the section, unless otherwise stated, the ground ring $\k$ is an arbitrary commutative unital ring, a basis always means a $\k$-basis. 

\subsection{Definition and first properties}\label{SSCC}
Let $A$ be a graded $\k$-superalgebra, and $I$ be a finite partially ordered set. A subset $\Omega\subseteq I$ is called an {\em upper set} if $i\in\Om$ and $j\geq i$ imply $j\in\Om$. Examples of upper sets are 
$$
I^{>i}:=\{j\in I\mid j>i\}\quad\text{and}\quad I^{\geq i}:=\{j\in I\mid j\geq i\}
$$
for a fixed $i\in I$.

\begin{Definition} \label{DCC} %{\rm \cite{}}%{\bf ()}
{\rm 
A {\em heredity data} on $A$ consist of a partially ordered set $I$ and finite sets $X=\bigsqcup_{i\in I}X(i)$ and $Y=\bigsqcup_{i\in I}Y(i)$ of non-zero homogeneous elements of $A$ with distinguished {\em initial elements} $e^i\in X(i)\cap  Y(i)$ for each $i\in I$.
For $i\in I$ and $\Om\subseteq I$, we set 
\begin{align*}
&Z(i):=X(i)\times Y(i),\ Z(\Om):=\textstyle \bigsqcup_{j\in \Om} Z(j),
\\ 
&Z^{>i}:=Z(I^{>i}), \  Z^{\geq i}:=Z(I^{\geq i}),  \\  
&A(\Om):=\spa\{xy \mid (x,y)\in \Om\},\\  
&A^{>i}:=A(Z^{>i}),\  A^{\geq i}:=A(Z^{\geq i}).  
\end{align*}
We require that the following axioms hold: 
\begin{enumerate}
\item[{\rm (a)}] $B:=\{x y \mid  (x,y)\in Z\}$ is a basis of $A$; 

\item[{\rm (b)}] For all $i\in I$, $x\in X(i)$, $y\in Y(i)$ and $a\in A$, we have
$$
a x \equiv \sum_{x'\in X(i)}l^x_{x'}(a)x' \pmod{A^{>i}}
\ \ \text{and}\ \ 
ya \equiv \sum_{y'\in Y(i)}r^y_{y'}(a)y' \pmod{A^{>i}}
$$
for some $l^x_{x'}(a),r^y_{y'}(a)\in\k$;

\item[{\rm (c)}] For all $i\in I$, we have  
\begin{align*}
%&e_ie_j=\de_{i,j}e_i &(i,j\in I);\\
&xe_i= x,\ e_ix= \de_{x,e_i}x,\ e_i y= y,\ ye_i= \de_{y,e_i}y &(x\in X(i),\ y\in Y(i));
\\
&e_jx=x\ \text{or}\ 0,\ ye_j=y\ \text{or}\ 0 &(x\in X,\ y\in Y,\ j\in I).
\end{align*}
\end{enumerate}
}
\end{Definition}

If $A$ is endowed with a heredity data $I,X,Y$, we call $A$ {\em based quasi-hereditary (with respect to the poset $I$)}, and refer to $B$ as a {\em heredity basis} of $A$.

%Let $\Omega\subseteq I$ be a coideal, i.e. a subset such that $i\in\Om$ and $j\geq i$ imply $j\in\Om$. Important examples are $\Om=I^{\geq i}:=\{j\in I\mid j\geq i\}$ and $\Om=I^{> i}:=\{j\in I\mid j> i\}$ for a fixed $i\in I$. Define $$Z(\Om):=\bigsqcup_{i\in\Om}Z(i)\quad\text{and}\quad A(\Om):=\spa\{xy \mid (x, y)\in Z(\Om)\}.$$ In particular, $A(\varnothing)=0$, $A(I)=A$, and $A(I^{> i})=A^{> i}$. We also denote $A^{\geq i}:=A(I^{\geq i})$. 

\begin{Lemma} \label{LHI} %{\rm \cite{}}%{\bf ()}
If $\Om\subseteq I$ is an upper set, then $A(\Om)$ is the (two-sided) ideal generated by $\{e_i\mid i\in\Om\}$. 
\end{Lemma}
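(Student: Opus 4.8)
The plan is to prove the two inclusions "$A(\Om)$ is contained in the ideal generated by $\{e_i \mid i \in \Om\}$" and "the ideal generated by $\{e_i \mid i \in \Om\}$ is contained in $A(\Om)$" separately, relying on axioms (b) and (c). First I would fix an upper set $\Om \subseteq I$ and write $N$ for the two-sided ideal of $A$ generated by $\{e_i \mid i \in \Om\}$. For the inclusion $N \subseteq A(\Om)$: by axiom (c), for each $i \in \Om$ we have $e_i = x y$ with $x = y = e^i \in X(i) \cap Y(i)$, so $e_i \in A(\Om)$; hence it suffices to show $A(\Om)$ is a two-sided ideal of $A$. Spanning elements of $A(\Om)$ have the form $xy$ with $(x,y) \in Z(j)$ for some $j \in \Om$. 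To see $a \cdot xy \in A(\Om)$ for $a \in A$, apply the first congruence in (b): $ax \equiv \sum_{x' \in X(j)} l^x_{x'}(a) x' \pmod{A^{>j}}$, so $axy \equiv \sum_{x'} l^x_{x'}(a) x'y \pmod{A^{>j} \cdot y}$. The main point is then that $A^{>j} y \subseteq A^{>j} \subseteq A(\Om)$: the first inclusion because $A^{>j}$ is spanned by products $x''y''$ with $(x'',y'') \in Z(k)$, $k > j$, and $x''y'' \cdot y \in A^{>k} \subseteq A^{>j}$ by another application of (b) together with (c), and the second inclusion because $I^{>j} \subseteq \Om$ since $\Om$ is an upper set and $j \in \Om$. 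Meanwhile each $x'y$ with $x' \in X(j)$ lies in $A(Z(j)) \subseteq A(\Om)$. The right-multiplication case $xy \cdot a \in A(\Om)$ is symmetric, using the second congruence in (b).

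For the reverse inclusion $A(\Om) \subseteq N$: I would show that each spanning element $xy$ with $(x,y) \in Z(j)$, $j \in \Om$, lies in $N$. By axiom (c) we have $x = x e_j$ and $y = e_j y$, hence $xy = x e_j y \in A e_j A \subseteq N$. This direction is essentially immediate once (c) is in hand.

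Combining the two inclusions gives $A(\Om) = N$, which is the assertion. The only genuinely delicate step is verifying $A^{>j} \subseteq A(\Om)$ (equivalently, that $A^{>j}$ is stable under the one-sided multiplications that arise), and here I expect one wants to argue by a downward induction on $j$ within $\Om$, or simply to observe the cleaner fact that $A^{>i} = A(Z^{>i}) = A(I^{>i})$ and $I^{>i}$ is itself an upper set, so that the statement "$A(\Om)$ is a two-sided ideal" for the special upper sets $\Om = I^{>i}$ can be fed back into the argument. In fact the slickest route may be to first prove, by induction on $|\Om|$ (writing $\Om = \Om' \sqcup \{i\}$ with $i$ minimal in $\Om$, so that $\Om'$ and $I^{>i}$ are both upper sets properly contained in $\Om$ or handled by a smaller case), that $A(\Om)$ is a two-sided ideal and equals $\sum_{i \in \Om} A e_i A$; the congruences in (b) are exactly what is needed to close the induction, and axiom (c) handles the base identifications. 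The bookkeeping with the "$\pmod{A^{>j}}$" congruences is routine but should be written carefully so that the ambient ideal in which one works is always one of the $A^{>j}$ or $A(\Om)$ already known to be closed under the relevant operations.
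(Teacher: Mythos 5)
Your proposal is correct and follows essentially the same route as the paper: axiom (b) yields that $A(\Om)$ is a two-sided ideal, the containment $\{e_i\mid i\in\Om\}\subseteq A(\Om)$ gives one inclusion, and $xy=xe_iy$ from axiom (c) gives the other. The only difference is that the paper dismisses the ideal property as ``clear from Definition (b)'', whereas you correctly observe that controlling the error terms in $A^{>j}$ requires a small induction (on $|\Om|$, or downward over the finite poset, using that $I^{>j}$ is a strictly smaller upper set contained in $\Om$) — a detail worth spelling out, and your proposed induction closes correctly.
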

\begin{proof}
That $A(\Om)$ is an ideal is clear from Definition~\ref{DCC}(b). That $A(\Om)$ contains the ideal generated by $\{e_i\mid i\in\Om\}$ is now clear since $A(\Om)\supseteq \{e_i\mid i\in\Om\}$. The converse containment follows from $xy=xe_iy$ for $(x,y)\in Z(i)$, see Definition~\ref{DCC}(c). 
\end{proof}

%We refer to the ideals $A(\Om)$ as {\em heredity ideals}. 
%The following result is clear from the definitions:

\begin{Lemma} \label{L290417} %{\rm \cite{}}%{\bf ()}
Let $\Om,\Theta\subseteq I$ be upper sets.
\begin{enumerate}
\item[{\rm (i)}] $A(\Om) \subseteq A(\Theta)$ if and only if $\Om\subseteq \Theta$;
\item[{\rm (ii)}] $A(\Om) A(\Theta)\subseteq A(\Om)\cap A(\Theta)= A(\Om\cap\Theta)$.
\end{enumerate}
\end{Lemma}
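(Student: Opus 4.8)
The plan is to establish everything in terms of the combinatorics of the index set $I$ and the basis $B$, using Lemma~\ref{LHI} as the main tool. The key observation is that, since $B=\{xy\mid (x,y)\in Z\}$ is a $\k$-basis of $A$ (Definition~\ref{DCC}(a)) and $A(\Om)$ is by definition the span of the sub-collection $\{xy\mid (x,y)\in Z(\Om)\}$, the submodule $A(\Om)$ is a \emph{based} submodule: it has a basis consisting of a subset of $B$, namely those basis vectors $xy$ with $(x,y)\in Z(j)$ for some $j\in\Om$. Consequently, for upper sets $\Om,\Theta$ we have the clean identity $A(\Om)\cap A(\Theta)=A(\Om\cap\Theta)$ purely from the disjointness of the decomposition $Z=\bigsqcup_{j\in I}Z(j)$: a $\k$-linear combination of basis elements lies in $A(\Om)$ iff all basis elements appearing with nonzero coefficient are indexed by $\Om$, and similarly for $\Theta$, so it lies in both iff all such indices lie in $\Om\cap\Theta$. (Note $\Om\cap\Theta$ is again an upper set, so $A(\Om\cap\Theta)$ is defined and is itself based.)

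For part (i), the ``if'' direction is immediate from the definition, since $\Om\subseteq\Theta$ gives $Z(\Om)\subseteq Z(\Theta)$ and hence $A(\Om)\subseteq A(\Theta)$. For the ``only if'' direction, suppose $\Om\not\subseteq\Theta$ and pick $i\in\Om\setminus\Theta$. Then the initial element $e_i=e^ie^i\in X(i)Y(i)\subseteq A(\Om)$ (using Definition~\ref{DCC}(c), $e^ie^i=e^i$, where I write $e_i$ for the idempotent $e^i$ as in Lemma~\ref{LHI}). On the other hand $e_i\notin A(\Theta)$: writing $e_i$ in the basis $B$ it is the single basis element $e^ie^i$ indexed by $i\in Z(i)$, which does not appear among the basis elements spanning $A(\Theta)$ since $i\notin\Theta$. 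Hence $A(\Om)\not\subseteq A(\Theta)$, which is the contrapositive of what we want.

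For part (ii), the inclusion $A(\Om)A(\Theta)\subseteq A(\Om)$ holds because $A(\Om)$ is a two-sided ideal by Lemma~\ref{LHI} (or Definition~\ref{DCC}(b)), and likewise $A(\Om)A(\Theta)\subseteq A(\Theta)$; together these give $A(\Om)A(\Theta)\subseteq A(\Om)\cap A(\Theta)$. The equality $A(\Om)\cap A(\Theta)=A(\Om\cap\Theta)$ is exactly the based-submodule observation from the first paragraph. I do not anticipate a serious obstacle here: the only point requiring care is to make sure that ``being spanned by a subset of the basis $B$'' genuinely forces the intersection identity, i.e.\ that if $a=\sum_{(x,y)\in Z}c_{x,y}\,xy$ lies in $A(\Om)$ then $c_{x,y}=0$ whenever $(x,y)\notin Z(\Om)$ --- this is just linear independence of $B$ together with the fact that $A(\Om)$ is spanned by the basis vectors indexed by $Z(\Om)$, so any element of $A(\Om)$ expanded in $B$ has support inside $Z(\Om)$. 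Everything else is bookkeeping with upper sets.
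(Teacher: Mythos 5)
Your proposal is correct and follows essentially the same route as the paper: the forward implication of (i) and the intersection identity in (ii) both come from the linear independence of the heredity basis $B$ together with the disjoint decomposition $Z=\bigsqcup_{j\in I}Z(j)$ (Definition~\ref{DCC}(a)), and the inclusion $A(\Om)A(\Theta)\subseteq A(\Om)\cap A(\Theta)$ comes from the ideal property via Lemma~\ref{LHI}. The only cosmetic difference is that for (i) you exhibit the specific element $e_i=e^ie^i$ in $A(\Om)\setminus A(\Theta)$, whereas the paper observes that any $xy$ with $(x,y)\in Z(i)$ works; both are the same argument.
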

\begin{proof}
(i) If $\Om \not\subseteq \Theta$ and $i\in \Om\setminus \Theta$, it follows from Definition~\ref{DCC}(a) that $xy\in A(\Om) \setminus A(\Theta)$ for all $(x,y)\in Z(i)$, i.e. $A(\Om) \not\subseteq A(\Theta)$. 
The converse is obvious. 

(ii) As $A(\Om)$, $A(\Theta)$ are ideals by Lemma~\ref{LHI}, the containment 
$A(\Om) A(\Theta)\subseteq A(\Om)\cap A(\Theta)$ is clear. The equality $A(\Om)\cap A(\Theta)= A(\Om\cap\Theta)$ comes from Definition~\ref{DCC}(a). 
\end{proof}

\begin{Lemma} \label{idemaction}
Let \(x \in X(i)\), \(y \in Y(i)\). If \(j \not \leq i\), then \(e_j x = y e_j = 0\).
\end{Lemma}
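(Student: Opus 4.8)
The plan is to lean on Definition~\ref{DCC}(c), which already forces $e_j x\in\{x,0\}$ and $ye_j\in\{y,0\}$, so that the whole content of the lemma is to exclude the alternatives $e_jx=x$ and $ye_j=y$ when $j\not\leq i$. I would exclude them by showing that either equality places $x$ (respectively $y$) inside the ideal $A(I^{\geq i}\cap I^{\geq j})$, and then extracting a contradiction from the basis axiom Definition~\ref{DCC}(a).

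First I would record the elementary observation that $e_k\in A(I^{\geq k})$ for every $k\in I$: indeed $e_k=e_ke_k$ and $(e_k,e_k)\in Z(k)\subseteq Z(I^{\geq k})$. Since $I^{\geq k}$ is an upper set, $A(I^{\geq k})$ is a two-sided ideal by Lemma~\ref{LHI}, so in particular $e_kA\subseteq A(I^{\geq k})$ and $Ae_k\subseteq A(I^{\geq k})$. Now suppose $e_jx=x$. Then $x=e_jx\in A(I^{\geq j})$; on the other hand $x=xe_i$ by Definition~\ref{DCC}(c) with $e_i\in A(I^{\geq i})$, so $x=x\cdot e_i\in A(I^{\geq j})A(I^{\geq i})\subseteq A(I^{\geq j}\cap I^{\geq i})$ by Lemma~\ref{L290417}(ii). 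But $A(I^{\geq j}\cap I^{\geq i})$ is by definition the span of the sub-collection $\{uv\mid (u,v)\in Z(I^{\geq j}\cap I^{\geq i})\}$ of the basis $B$, while $x=xe_i$ is precisely the basis element indexed by $(x,e_i)\in Z(i)$. Uniqueness of expansions in the basis $B$ then forces $(x,e_i)\in Z(I^{\geq j}\cap I^{\geq i})$, i.e. $i\geq j$, contradicting $j\not\leq i$. Hence $e_jx=0$. The case of $ye_j$ is symmetric: assuming $ye_j=y$ gives $y=ye_j\in A(I^{\geq j})$, while $y=e_iy$ by Definition~\ref{DCC}(c) with $e_i\in A(I^{\geq i})$, so $y=e_i\cdot y\in A(I^{\geq i})A(I^{\geq j})\subseteq A(I^{\geq j}\cap I^{\geq i})$; since $y=e_iy$ is the basis element indexed by $(e_i,y)\in Z(i)$, the same contradiction follows, giving $ye_j=0$.

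I do not anticipate a genuine obstacle. The only point needing a small amount of care is the step ``a basis vector lying in the span of a sub-collection of a basis must belong to that sub-collection'', which is just the uniqueness of basis expansions, together with the bookkeeping — supplied entirely by Definition~\ref{DCC}(c) — that $xe_i$ and $e_iy$ really are the basis elements of $B$ attached to the pairs $(x,e_i)$ and $(e_i,y)$ in $Z(i)$.
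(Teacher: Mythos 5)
Your proof is correct and follows essentially the same route as the paper: both arguments use Definition~\ref{DCC}(c) to reduce to excluding $e_jx=x$ (resp.\ $ye_j=y$), and both exclude it by showing that $e_jx$ lies in $A^{\geq j}$ while the basis element $x=xe_i$, indexed by $(x,e_i)\in Z(i)$ with $i\notin I^{\geq j}$, cannot. The only cosmetic difference is that the paper gets $e_jx\in A^{\geq j}$ directly from Definition~\ref{DCC}(b), whereas you route through Lemma~\ref{LHI} and Lemma~\ref{L290417}(ii); your detour via $A(I^{\geq j}\cap I^{\geq i})$ is harmless but unnecessary.
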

\begin{proof}
As \(e_j \in Y(j)\), we have by Definition~\ref{DCC}(b) that \(e_j x  \in A^{\geq j}\). Since \(x \notin A^{\geq j}\), we have that \(e_j x \neq x\), so Definition~\ref{DCC}(c) gives us \(e_jx = 0\). The proof of \(ye_j = 0\) is similar.
\end{proof}

\begin{Lemma} %\label{}%{\rm \cite{}}%{\bf ()}
For any $i,j\in I$, we have $e_ie_j=\de_{i,j}e_i$. 
\end{Lemma}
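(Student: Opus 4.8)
The statement to prove is: for any $i,j\in I$, we have $e_ie_j=\de_{i,j}e_i$.

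The plan is to split into the cases $i=j$ and $i\neq j$. For $i=j$, this is immediate from Definition~\ref{DCC}(c): taking $x=e_i\in X(i)$ in the relation $e_ix=\de_{x,e_i}x$ gives $e_ie_i=e_i$. So the content is in the case $i\neq j$.

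For $i\neq j$, since $I$ is partially ordered and $i\neq j$, we cannot have both $j\leq i$ and $i\leq j$; hence at least one of $j\not\leq i$ or $i\not\leq j$ holds. In the first case, apply Lemma~\ref{idemaction} with $x=e_i\in X(i)$: since $j\not\leq i$ we get $e_je_i=0$. But we want $e_ie_j$, so I would instead use the following: if $j\not\leq i$, apply Lemma~\ref{idemaction} with $y=e_i\in Y(i)$ to get $e_ie_j=ye_j=0$. If instead $i\not\leq j$, apply Lemma~\ref{idemaction} with $x=e_j\in X(j)$ and the index ``$i$'' playing the role of ``$j$'': since $i\not\leq j$ we get $e_ie_j=e_i x=0$. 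Either way $e_ie_j=0=\de_{i,j}e_i$, completing the argument.

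There is no real obstacle here; the only point requiring a moment's care is bookkeeping the roles in Lemma~\ref{idemaction}, namely recognizing that $e_i$ lies in both $X(i)$ and $Y(i)$ (by the ``initial element'' requirement of Definition~\ref{DCC}) so that Lemma~\ref{idemaction} can be invoked with whichever of $e_i$ or $e_j$ is needed, and checking that for distinct $i,j$ the poset axioms force at least one of the incomparability hypotheses $j\not\leq i$, $i\not\leq j$ to hold.
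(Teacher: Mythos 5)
Your proof is correct, but it takes a genuinely different route from the paper's. The paper never invokes the partial order on $I$: for $i\neq j$ it reads $e_ie_j$ in two ways using the second line of Definition~\ref{DCC}(c) --- viewing $e_j$ as an element of $X$, the product $e_ie_j$ is either $e_j$ or $0$; viewing $e_i$ as an element of $Y$, it is either $e_i$ or $0$ --- and since $e_i\neq e_j$ (they are distinct basis elements by Definition~\ref{DCC}(a)), the only consistent value is $0$. You instead route through Lemma~\ref{idemaction}, which requires first observing that antisymmetry of the partial order forces at least one of $j\not\leq i$, $i\not\leq j$ for distinct $i,j$, and then carefully matching $e_i$ or $e_j$ to the hypotheses of that lemma (which you do correctly, including the self-correction about which of $e_je_i$ versus $e_ie_j$ the lemma actually controls). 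Your argument is legitimate since Lemma~\ref{idemaction} precedes this statement and does not depend on it, but it is less economical: it uses Definition~\ref{DCC}(b) (via Lemma~\ref{idemaction}) and the poset axioms, whereas the paper's two-sided reading of axiom (c) needs neither. The upshot of the paper's approach is that orthogonality of the $e_i$ is seen to be a purely combinatorial consequence of the idempotent axioms, independent of the ordering; the upshot of yours is that it reuses an already-established structural fact rather than returning to the axioms.
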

\begin{proof}
Since $e_i\in I$, the equality $e_i^2=e_i$ comes from 
Definition~\ref{DCC}(c). Let $i\neq j$. By Definition~\ref{DCC}(c) again, we have that $e_ie_j$ is either $e_j$ or $0$ and on the other hand either $e_i$ or $0$. Since $e_i\neq e_j$ by Definition~\ref{DCC}(a), we deduce that $e_ie_j=0$. 
\end{proof}

Let $i\in I$, $x\in X(i)$ and $y\in Y(i)$. By Definition~\ref{DCC}(b), 
$$
\sum_{x'\in X(i)} l^x_{x'}(y)x' \equiv yx\equiv \sum_{y'\in Y(i)} r^y_{y'}(x)y'
\pmod{A^{>i}}.
$$
By Definition~\ref{DCC}(c), we have $x'= x'e_i$ and $y'= e_iy'$, so taking into account Definition~\ref{DCC}(a), we deduce that 
\begin{equation}\label{E210617}
yx\equiv f_i(y,x)e_i \pmod{A^{>i}}
\end{equation}
for some $f_i(y,x)\in\k$. 
This defines a function $f_i:Y(i)\times X(i)\to \k$. Note that  
\begin{equation}\label{E121217_3}
f_i(e_i,e_i)=1
\end{equation}
and 
\begin{equation}\label{E050817}
\text{$f_i(y,x)=0$\quad unless\quad  $\deg(x)\deg(y)=1$.}
\end{equation}

\begin{Definition} \label{DSB} 
{\rm \cite[1.2.1]{DR}} 
{\rm 
A graded $\k$-superalgebra $A$ is called {\em standardly based} with respect to a finite poset $I$ if it possesses a {\em standard basis}, i.e. a homogeneous basis of the form 
$$
\{b^i_{x,y}\mid i\in I,\ x\in X(i),\ y\in Y(i)\}
$$
for some index sets $X(i),Y(i)$ such that, setting $A^{>i}:=\spa\{b^j_{x,y}\mid j>i\}$, 
for all $a\in A$, $i\in I$, $x\in X(i)$, $y\in Y(i)$, we have
\begin{align*}
 a b^i_{x,y} &\equiv \sum_{x'\in X(i)}l^x_{x'}( a) b^i_{x',y}  \pmod{A^{>i}},
\\
 b^i_{x,y}  a &\equiv \sum_{y'\in Y(i)}r^y_{y'}( a) b^i_{x,y'} \pmod{ A^{>i}}
\end{align*}
for some $l^x_{x'}(a)\in \k$ independent of $y$ and $r^y_{y'}(a)\in\k$ independent of $x$. 
}
\end{Definition}

By \cite[(1.2.3)]{DR},
$$
b^i_{x,y}b^i_{x',y'}\equiv f_i(y,x')b^i_{x,y'}\pmod{A^{>i}}
$$ 
for some $f_i(y,x')\in \k$. 
%We have a bilinear form $(\cdot,\cdot)_i$ on  $A^i:=\spa(b^i_{x,y})$ such that
%$$(a^{i}_{xy},a^i_{x',y'})_i=f_i(y',x)\qquad(x,x'\in X(i),\ y,y'\in Y(i)).$$
The standardly based algebra is called standardly full-based if the $\k$-span of the elements $f_i(y,x)$, with $x\in X(i)$, $y\in Y(i)$, is $\k$. The following is clear using (\ref{E121217_3}):

\begin{Lemma} %\label{}%{\rm \cite{}}%{\bf ()}
If $A$ is a based quasi-hereditary algebra then it is standardly full-based with $b^i_{x,y}=xy$ for all $i\in I$ and all $x\in X(i)$, $y\in Y(i)$.
\end{Lemma}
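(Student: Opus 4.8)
The plan is to verify the three defining conditions of a standardly based algebra (Definition~\ref{DSB}) directly, taking $X(i),Y(i)$ to be the given heredity data index sets and setting $b^i_{x,y}:=xy$. First I would observe that by axiom (a) of Definition~\ref{DCC} the set $B=\{xy\mid i\in I,\ x\in X(i),\ y\in Y(i)\}$ is already a homogeneous basis of $A$ of exactly the required shape, and that the two notions of $A^{>i}$ coincide: the one in Definition~\ref{DSB} is $\spa\{b^j_{x,y}\mid j>i\}=\spa\{xy\mid (x,y)\in Z^{>i}\}=A^{>i}$ as defined in Definition~\ref{DCC}. So the ambient filtration is the same, and there is nothing to reconcile there.

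Next I would check the two congruences. For $a\in A$, $i\in I$, $x\in X(i)$, $y\in Y(i)$, Definition~\ref{DCC}(b) gives $ax\equiv\sum_{x'\in X(i)}l^x_{x'}(a)\,x'\pmod{A^{>i}}$ with $l^x_{x'}(a)\in\k$ \emph{independent of $y$}. Right-multiplying by $y$ and using that $A^{>i}$ is a right ideal (Lemma~\ref{LHI}, or directly Definition~\ref{DCC}(b)) yields $a(xy)=(ax)y\equiv\sum_{x'\in X(i)}l^x_{x'}(a)\,x'y\pmod{A^{>i}}$, i.e. $a\,b^i_{x,y}\equiv\sum_{x'}l^x_{x'}(a)\,b^i_{x',y}$, as required. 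The right-hand congruence is symmetric: from $ya\equiv\sum_{y'\in Y(i)}r^y_{y'}(a)\,y'\pmod{A^{>i}}$ with $r^y_{y'}(a)$ independent of $x$, left-multiply by $x$ and use that $A^{>i}$ is a left ideal to get $b^i_{x,y}\,a=x(ya)\equiv\sum_{y'}r^y_{y'}(a)\,b^i_{x,y'}$. This establishes that $A$ is standardly based with the asserted standard basis, and moreover the structure constants $f_i(y,x')$ of Definition~\ref{DSB} agree with those of \eqref{E210617}, since $b^i_{x,y}b^i_{x',y'}=x(yx')y'\equiv f_i(y,x')\,xe_iy'=f_i(y,x')\,b^i_{x,y'}\pmod{A^{>i}}$ using \eqref{E210617} and Definition~\ref{DCC}(c).

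Finally, for the full-based condition I would invoke \eqref{E121217_3}: since $e^i\in X(i)\cap Y(i)$, we have $f_i(e^i,e^i)=1\in\k$, so the $\k$-span of $\{f_i(y,x)\mid x\in X(i),\ y\in Y(i)\}$ is all of $\k$. This is exactly the statement that $A$ is standardly full-based, so the lemma follows.

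Honestly there is no real obstacle here — the statement is essentially a bookkeeping unpacking of the two sets of axioms, and the paper itself flags it as ``clear''. The only point requiring a moment's care is the independence clauses (that $l^x_{x'}(a)$ does not depend on $y$ and $r^y_{y'}(a)$ does not depend on $x$), but these are literally how the scalars are produced in Definition~\ref{DCC}(b), so they transfer for free. One should also double-check that multiplying a congruence mod $A^{>i}$ by a fixed element on one side preserves it, which is immediate from $A^{>i}$ being a two-sided ideal by Lemma~\ref{LHI}.
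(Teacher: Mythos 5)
Your proposal is correct and matches the paper's intent exactly: the paper dismisses the verification as clear, noting only that full-basedness follows from $f_i(e_i,e_i)=1$ (equation (\ref{E121217_3})), which is precisely the key point you invoke, and your unpacking of the congruences via Definition~\ref{DCC}(b) and the ideal property of $A^{>i}$ is the intended routine check.
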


A homogeneous anti-involution $\tau$ on $A$ is called {\em standard} (with respect to $I,X,Y$) if for all $i\in I$ there is a bijection $X(i)\iso Y(i),\ x\mapsto y(x)$ such that $y(e_i)=e_i$ and  
\begin{equation}\label{E200517}
\tau(x)= y(x).
\end{equation}
For a standard anti-involution $\tau$, we have 
\begin{equation}\label{E230517_6}
\tau(xy(x'))= x'y(x)
\end{equation}
 and $\tau(e_i)= e_i$ for all $i\in I,\ x,x'\in X(i)$. 
If $\tau$ is a standard anti-involution on $A$ then $\{xy\mid (x,y)\in Z\}$ is a {\em cellular basis} of $A$ with respect to $\tau$, see \cite[(6.1.4)]{DR}.
%\end{Remark}

\subsection{Standard modules}
\label{SSDelta}
Throughout the subsection, $A$ is a based quasi-hereditary $\k$-superalgebra with heredity data $I,X,Y$.  

Fix $i\in I$ and upper sets $\Om',\Om\subseteq I$ such that $\Om'\setminus \Om=\{i\}$. For example we could take $\Om'=I^{\geq i}$ and $\Om=I^{>i}$. 
Denote 
$$\tilde A:=A/A(\Om)\quad \text{and}\quad \tilde a:=a+A(\Om)\in \tilde A\qquad (a\in A).
$$ 
By inflation, $\tilde A$-modules will be automatically considered as $A$-modules. The {\em standard module $\De(i)$} and the {\em right standard module $\De^\op(i)$} are defined as
\begin{equation}\label{EDe}
\De(i):=\tilde A \tilde e_i\quad\text{and}\quad \De^\op(i):=\tilde e_i\tilde A.
\end{equation}
%By Definition~\ref{DCC}(c), $\tilde e_i$ is an idempotent in $\tilde A$, so as $\tilde A$-modules, $\De(i)$ and $\De^\op(i)$ are projective. 

By Definition~\ref{DCC}, we have 
%\begin{equation}\label{E050717_2}
$$\De(i)=\spa\{\tilde x \mid x\in X(i)\}\quad \text{and}\quad \De^\op(i)=\spa\{\tilde y \mid y\in Y(i)\},$$ 
%\end{equation}
so $\De(i)$ and $\De^\op(i)$ can be defined respectively as free $\k$-modules with bases $\{v_x\mid x\in X(i)\}$ and $\{w_y\mid y\in Y(i)\}$ and the actions 
$$av_x=\sum_{x'\in X(i)}l_{x'}^x(a)v_{x'}\quad \text{and}\quad    
w_y a=\sum_{y'\in Y(i)}r_{y'}^y(a)w_{y'}
\qquad(a\in A).
$$ 
This implies in particular that the definition of $\De(i)$ and $\De^\op(i)$ does not depend on the choice of $\Om$ and $\Om'$ as long as $\Om'\setminus \Om=\{i\}$.

Note that $v_i:=v_{e_i}$ is a cyclic generator of $\De(i)$ such that 
\begin{equation}\label{E121217}
e_iv_i=v_i\quad \text{and}\quad xv_i=v_x\qquad(x\in X(i)).
\end{equation}
Moreover, 
\begin{equation}\label{E121217_1}
e_i v_x=0\qquad(x\in X(i)\setminus\{e_i\}).
\end{equation}
Taking into account Lemma \ref{idemaction}, we deduce that  $e_j\De(i)\neq 0$ implies $j\leq i$.  
Similar statements hold for $\De^\op(i)$. 
We have 
\begin{equation}\label{LEndDe}
\End_A(\De(i))\cong \End_{\tilde A}(\De(i))\cong \End_{\tilde A}(\tilde Ae_i,\De(i))\cong e_i\De(i)\cong 
\k.
\end{equation}

%\begin{Lemma}  %{\rm \cite{}}%{\bf ()} We have $\End_A(\De(i))\cong \k$ and $\End_A(\De^\op(i))\cong \k$.  \end{Lemma}

It follows from the definitions that as $A$-bimodules,
\begin{equation}\label{E121217_2}
A(\Om')/A(\Om)\cong \De(i)\otimes_\k \De^\op(i).
\end{equation}

Recalling (\ref{E210617}), we have a bilinear pairing $
(\cdot,\cdot)_i:\De(i)\times \De^\op(i)\to \k$ satisfying 
$$
(v_x,w_y)_i= f_i(y,x).
$$

\begin{Lemma} %\label{}%{\rm \cite{}}%{\bf ()}
We have 
\begin{enumerate}
\item[{\rm (i)}] $(v_i,w_i)_i=1$;
\item[{\rm (ii)}] $(av,w)_i=(v,wa)_i$ for all $v\in\De(i),w\in \De^\op(i),a\in A$.
\end{enumerate}
\end{Lemma}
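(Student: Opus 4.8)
The plan is to unwind both claims directly from the definition of the pairing $(v_x,w_y)_i = f_i(y,x)$ together with the basic identities established earlier in the subsection.

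For part (i): I would simply evaluate $(v_i,w_i)_i = (v_{e_i},w_{e_i})_i = f_i(e_i,e_i)$, which equals $1$ by \eqref{E121217_3}. So (i) is immediate and requires no real work.

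For part (ii): by bilinearity it suffices to check the identity on basis elements, i.e. to show $(a v_x, w_y)_i = (v_x, w_y a)_i$ for all $x\in X(i)$, $y\in Y(i)$, $a\in A$. The strategy is to compute both sides inside $\tilde A = A/A(\Om)$ using the defining congruence \eqref{E210617}, namely $yx \equiv f_i(y,x) e_i \pmod{A^{>i}}$, and the module actions from \eqref{E121217}. Concretely, recall $x v_i = v_x$ and $w_y = \tilde e_i \tilde y$-type identities; the key point is that in $\tilde A$ the product $\tilde y\, \tilde a\, \tilde x$ can be read two ways. Expanding $ax \equiv \sum_{x'} l^x_{x'}(a) x' \pmod{A^{>i}}$ and pairing against $w_y$ gives $(av_x, w_y)_i = \sum_{x'} l^x_{x'}(a) f_i(y,x')$; expanding $ya \equiv \sum_{y'} r^y_{y'}(a) y' \pmod{A^{>i}}$ and pairing against $v_x$ gives $(v_x, w_y a)_i = \sum_{y'} r^y_{y'}(a) f_i(y',x)$. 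So the claim reduces to the identity $\sum_{x'} l^x_{x'}(a) f_i(y,x') = \sum_{y'} r^y_{y'}(a) f_i(y',x)$, which is exactly obtained by comparing the two expressions for $y a x \bmod A^{>i}$: on one hand $yax \equiv y \sum_{x'} l^x_{x'}(a) x' \equiv \sum_{x'} l^x_{x'}(a) f_i(y,x') e_i$, and on the other $yax \equiv \sum_{y'} r^y_{y'}(a) y' x \equiv \sum_{y'} r^y_{y'}(a) f_i(y',x) e_i \pmod{A^{>i}}$, and these agree since $e_i \notin A^{>i}$ by Definition~\ref{DCC}(a). Equivalently, one can phrase the whole thing more slickly: the pairing is induced by $(v,w) \mapsto$ ``the $e_i$-coefficient of $\widetilde{wv}$'' under the bimodule identification, and associativity of multiplication in $\tilde A$ gives (ii) for free.

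The only mild obstacle is bookkeeping: making sure the reductions mod $A^{>i}$ are legitimate (that $A^{>i} \subseteq A(\Om)$, which holds since $\Om \supseteq I^{>i}$ as $\Om' \setminus \Om = \{i\}$ with $\Om$ an upper set) and that the coefficient-of-$e_i$ extraction is well-defined, both of which are guaranteed by the basis axiom Definition~\ref{DCC}(a) and Lemma~\ref{L290417}. Once that is in place the computation is purely formal. I would present (ii) via the associativity argument, as it avoids writing out the double sums.
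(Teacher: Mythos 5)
Your proposal is correct and follows essentially the same route as the paper: part (i) is read off from \eqref{E121217_3}, and part (ii) is proved by computing $(av_x,w_y)_i=\sum_{x'}l^x_{x'}(a)f_i(y,x')$ and $(v_x,w_ya)_i=\sum_{y'}r^y_{y'}(a)f_i(y',x)$ and then equating the two via the associativity $(ya)x=y(ax)$ read modulo $A^{>i}$, extracting the coefficient of $e_i$. Your added bookkeeping remarks (that $A^{>i}\subseteq A(\Om)$ and that the $e_i$-coefficient is well defined by Definition~\ref{DCC}(a)) are correct and only make explicit what the paper's proof uses implicitly.
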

\begin{proof}
(i) comes from (\ref{E121217_3}).

(ii) We follow \cite[(2.3.1)]{DR}. Let $x\in X(i), y\in Y(i)$. We have 
\begin{align*}
(av_x,w_y)_i&=\sum_{x'\in X(i)}l_{x'}^x(a)(v_{x'},w_y)_i
=\sum_{x'\in X(i)}l_{x'}^x(a)f_i(y,x'),
\\
(v_x,w_ya)_i&=\sum_{y'\in Y(i)}r_{y'}^y(a)(v_x,w_{y'})_i
=\sum_{y'\in Y(i)}r_{y'}^y(a)f_i(y',x).
\end{align*}

On the other hand, by Definition~\ref{DCC}(b) and (\ref{E210617}), modulo $A(\Om)$ we have
\begin{align*}
\sum_{y'\in Y(i)}r_{y'}^y(a)f_i(y',x)e_i&\equiv \sum_{y'\in Y(i)}r_{y'}^y(a)y'x=(ya)x=y(ax)=\sum_{x'\in X(i)}l_{x'}^x(a)yx'
\\
&\equiv \sum_{x'\in X(i)}l_{x'}^x(a)f_i(y,x')e_i,
\end{align*}
so\,
$$
\sum_{y'\in Y(i)}r_{y'}^y(a)f_i(y',x)= \sum_{x'\in X(i)}l_{x'}^x(a)f_i(y,x')e_i,
$$
completing the proof. 
\end{proof}

By the lemma, 
$$
\rad \De(i):=\{v\in\De(i)\mid (v,w)_i=0\ \text{for all $w\in \De^\op(i)$}\}
$$
is a submodule of $\De(i)$. 

%From now on until the end of the subsection we assume that  $\k$ is a field. 

\begin{Lemma}\label{LDeRad}{\rm \cite[(2.4.1)]{DR}} %{\bf ()}
Let $\k$ be a field. Then for each $i\in I$ we have that $$L(i):=\De(i)/\rad\De(i)$$ is an absolutely irreducible $A$-module. Furthermore, ignoring grading and superstructure, $\{L(i)\mid i\in I\}$ is a complete and irredundant set of irreducible $A$-modules up to an isomorphism. 
\end{Lemma}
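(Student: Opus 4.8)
The plan is the standard bilinear-form argument for cellular/standardly based algebras; throughout we ignore the grading and superstructure, since they play no role. Fix $i\in I$ and work with $\Om=I^{>i}$, so that $A(\Om)=A^{>i}$, $\tilde A=A/A^{>i}$, and, under $\De(i)=\spa\{\tilde x\mid x\in X(i)\}$, we have $v_x=\tilde x$ and $v_i=\tilde e_i$. The first and key step is to compute the action of the elements of $Y(i)\subseteq A$ on $\De(i)$: applying the quotient map $A\to\tilde A$ to (\ref{E210617}) gives, for $x\in X(i)$ and $y\in Y(i)$,
\[
y\cdot v_x=\widetilde{yx}=f_i(y,x)\,v_i=(v_x,w_y)_i\,v_i,
\]
and hence $y\cdot v=(v,w_y)_i\,v_i$ for all $v\in\De(i)$, $y\in Y(i)$, by bilinearity. (This is the module-theoretic shadow of $b^i_{x,y}b^i_{x',y'}\equiv f_i(y,x')b^i_{x,y'}$, quoted earlier from \cite{DR}.)

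I would next show that $\rad\De(i)$ is the \emph{unique} maximal submodule of $\De(i)$. It is proper, as $(v_i,w_i)_i=1\neq0$ gives $v_i\notin\rad\De(i)$ (so also $L(i)\neq0$). Conversely, if $M\subseteq\De(i)$ is a submodule and $v\in M\setminus\rad\De(i)$, then $(v,w_y)_i\neq0$ for some $y\in Y(i)$; since $\k$ is a field, the displayed identity yields $v_i=(v,w_y)_i^{-1}\,y\cdot v\in M$, and as $v_i$ is a cyclic generator of $\De(i)$ we conclude $M=\De(i)$. Thus every proper submodule of $\De(i)$ is contained in $\rad\De(i)$, so $L(i)$ is irreducible and is the unique irreducible quotient of $\De(i)$. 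For absolute irreducibility, note that $e_ix=\de_{x,e_i}x$ forces $e_i\De(i)=\k v_i$, hence $e_iL(i)=\k\bar v_i$ is one-dimensional and nonzero; any $\theta\in\End_A(L(i))$ satisfies $\theta(e_iL(i))=e_i\theta(L(i))\subseteq e_iL(i)$, so $\theta(\bar v_i)=c\,\bar v_i$ for some $c\in\k$, and since $\bar v_i$ generates $L(i)$ this forces $\theta=c\cdot\id$. Therefore $\End_A(L(i))=\k$, which for the finite-dimensional module $L(i)$ is equivalent to absolute irreducibility.

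For irredundancy, recall from the discussion following (\ref{E121217_1}) that $e_j\De(i)\neq0$ implies $j\leq i$; this passes to the quotient $L(i)$, and since $e_iL(i)\neq0$, an isomorphism $L(i)\cong L(j)$ would force $i\leq j$ and $j\leq i$, i.e. $i=j$. For completeness, let $L$ be an irreducible $A$-module, so $AL=L$. By Lemma~\ref{LHI} applied to the upper set $\Om=I$, together with Definition~\ref{DCC}(a), $A=A(I)=\sum_{j\in I}Ae_jA$, whence $L=AL=\sum_j Ae_jL$ and $e_jL\neq0$ for some $j$; fix such a $j$ maximal in $I$. Choose $0\neq u\in e_jL$, so $e_ju=u$. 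The $A$-linear map $Ae_j\to L$, $z\mapsto zu$, is surjective, since its image $(Ae_j)u=Au$ is a nonzero ($\ni u$) submodule of $L$. By maximality of $j$, $e_kL=0$ for all $k>j$, so $A^{>j}L=\sum_{k>j}Ae_k(AL)=\sum_{k>j}Ae_kL=0$; since $A^{>j}e_j\subseteq A^{>j}$, the map kills $A^{>j}e_j$ and thus factors through $\De(j)\cong Ae_j/A^{>j}e_j$ (a routine identification from $\De(j)=\tilde A\tilde e_j$ with $\Om=I^{>j}$), giving a surjection $\De(j)\twoheadrightarrow L$. As $L$ is irreducible and $\rad\De(j)$ is the unique maximal submodule of $\De(j)$, the kernel equals $\rad\De(j)$, so $L\cong L(j)$.

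The one genuinely delicate point is the completeness step: for an abstract irreducible $L$ one has to produce a surjection from some standard module, which needs the right choice of $j$ (maximal with $e_jL\neq0$, ensuring $A^{>j}L=0$) together with the identification $\De(j)\cong Ae_j/A^{>j}e_j$; everything else reduces quickly to the key identity $y\cdot v=(v,w_y)_i\,v_i$ and the normalization $(v_i,w_i)_i=1$. A minor point to keep in mind is that $A$ need not be unital, which is why $A=\sum_j Ae_jA$ — rather than $\sum_j e_j=1$ — is the correct input to the completeness argument.
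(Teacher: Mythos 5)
Your proof is correct and complete. The paper itself gives no argument for this lemma, simply citing \cite[(2.4.1)]{DR}; what you have written is precisely the standard Du--Rui argument that the citation points to: the identity $y\cdot v=(v,w_y)_i\,v_i$ shows $\rad\De(i)$ is the unique maximal submodule, $e_iL(i)=\k\bar v_i$ gives $\End_A(L(i))=\k$ and hence absolute irreducibility, and completeness follows by choosing $j$ maximal with $e_jL\neq 0$ and identifying $Ae_j/A^{>j}e_j$ with $\De(j)$. Your closing remarks correctly flag the only delicate points (the non-unital convention $A=\sum_jAe_jA$, which is justified by Lemma~\ref{LHI} with $\Om=I$, and the identification $Ae_j\cap A^{>j}=A^{>j}e_j$, which holds since $ae_j=ae_je_j$), so nothing further is needed.
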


By definition, the form $(\cdot,\cdot)_i$ is homogeneous, so 
$\rad \De(i)$ is a homogeneous submodule of $\De(i)$ and $L(i)$ is naturally a graded $A$-supermodule. We refer to the modules $L(i)$ as the {\em canonical irreducible $A$-modules}.  From Lemma~\ref{LDeRad}, we get:

\begin{Lemma}
Let $\k$ be a field. Then $$\{q^n\pi^\eps L(i)\mid i\in I,\ n\in\Z,\ \eps\in\Z/2\}$$ is a complete and irredundant set of irreducible graded $A$-supermodules up to a homogeneous isomorphism. 
\end{Lemma}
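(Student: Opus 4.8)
The plan is to deduce this statement from Lemma~\ref{LDeRad} by a standard grading/shift argument, so the real content is just bookkeeping about homogeneous isomorphisms and the action of the shift functors $q^n\pi^\eps$ on irreducibles. First I would recall that by Lemma~\ref{LDeRad}, ignoring grading and superstructure, $\{L(i)\mid i\in I\}$ is a complete irredundant set of irreducible $A$-modules. Since each $L(i)$ is naturally a graded $A$-supermodule (as noted just before the statement, $\rad\De(i)$ is homogeneous), and each $q^n\pi^\eps$ is an exact autoequivalence of $\mod A$, every $q^n\pi^\eps L(i)$ is an irreducible graded $A$-supermodule. Conversely, given any irreducible graded $A$-supermodule $M$, forgetting the grading and superstructure yields an irreducible $A$-module (a homogeneous submodule of $M$ is in particular a submodule, and $M\neq 0$), hence $M\cong L(i)$ as ungraded modules for a unique $i$; I would then argue that a graded module whose underlying ungraded module is irreducible must, up to a shift $q^n\pi^\eps$, be homogeneously isomorphic to $L(i)$.

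For the completeness part, the key point is that an irreducible graded $A$-supermodule is concentrated in a single degree $(n,\eps)$ in the following sense: pick a nonzero homogeneous element $m\in M_\eps^n$; then $Am$ is a nonzero graded submodule, hence $Am=M$, so $M$ is generated in degree $(n,\eps)$, and therefore $q^{-n}\pi^{-\eps}M$ is generated in degree $(0,\0)$. One then checks that a graded $A$-supermodule $N$ generated in degree $(0,\0)$ whose underlying ungraded module is $\cong L(i)$ is homogeneously isomorphic to $L(i)$: choosing $0\neq v_i\in L(i)$ with $e_iv_i=v_i$ (homogeneous of degree $\deg e_i$ — here I'd note one may as well normalise so that $e_i$ has degree $(0,\0)$, or track $\deg e_i$ explicitly) one builds a surjection $A\to L(i)$, $a\mapsto av_i$, and likewise $A\to N$ from a degree-$(0,\0)$ generator of $N$; matching kernels (both equal the annihilator of the generator, which is determined by the ungraded isomorphism type $L(i)$) produces the homogeneous isomorphism $N\iso q^{m}\pi^{\eps}L(i)$ for the appropriate shift recording $\deg e_i$. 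So every irreducible graded $A$-supermodule is homogeneously isomorphic to some $q^n\pi^\eps L(i)$.

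For irredundancy, suppose $q^n\pi^\eps L(i)\cong q^{n'}\pi^{\eps'}L(i')$ as graded $A$-supermodules. Forgetting grading and superstructure gives $L(i)\cong L(i')$ as $A$-modules, so $i=i'$ by Lemma~\ref{LDeRad}. It then remains to see that $q^n\pi^\eps L(i)\cong q^{n'}\pi^{\eps'}L(i)$ forces $n=n'$ and $\eps=\eps'$; equivalently, $q^m\pi^\delta L(i)\cong L(i)$ homogeneously implies $m=0$, $\delta=\0$. This follows because $L(i)$ is nonzero and has bounded grading (it is a quotient of $\De(i)$, which is free of finite rank over $\k$, hence nonzero in only finitely many degrees): comparing the graded dimensions in $R$, the equality $\dim^q_\pi(q^m\pi^\delta L(i))=\dim^q_\pi L(i)$ reads $q^m\pi^\delta\cdot\dim^q_\pi L(i)=\dim^q_\pi L(i)$ in the domain $\Z[q,q^{-1}][t]/(t^2-1)$ — wait, $R$ has zero divisors, so instead I would argue directly: the top nonzero degree $n_{\max}$ of $L(i)$ (with respect to the $q$-grading, say) shifts to $n_{\max}+m$, forcing $m=0$, and then within degree $n_{\max}$ the super-component shifts by $\delta$, and since $L(i)$ is nonzero in some super-parity there, $\delta=\0$.

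The main obstacle, such as it is, is the last paragraph: making precise that shifting a nonzero finitely-supported graded supermodule by a nontrivial $q^n\pi^\eps$ changes its isomorphism class. Over a field this is routine via comparing supports and super-components degree by degree as above, but one must be slightly careful because $L(i)$ may be nonzero in several $q$-degrees and because $\pi$ is a zero divisor, so the clean "cancel $\dim^q_\pi L(i)$" argument is unavailable; the fix is the support/parity comparison rather than a ring-theoretic cancellation. Everything else — exactness of the shift functors, that homogeneous submodules of an irreducible graded module are trivial, and that the ungraded isomorphism type pins down $i$ — is immediate from the definitions in Section~\ref{SQHA} and Lemma~\ref{LDeRad}.
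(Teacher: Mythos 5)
Your overall strategy (reduce to Lemma~\ref{LDeRad} plus bookkeeping about shifts) is the same one the paper intends, but two of your steps are genuinely broken, and both break for the same reason: the super-structure is not just bookkeeping. First, in the completeness direction you assert that forgetting the grading and superstructure on an irreducible graded $A$-supermodule $M$ yields an irreducible $A$-module, and you justify this by noting that homogeneous submodules are submodules. That implication goes the wrong way: graded-simplicity only controls \emph{homogeneous} submodules, and for superalgebras a graded-simple supermodule need not be simple as an ungraded module (the "type Q" phenomenon; e.g.\ $\k[c]/(c^2-1)$ with $c$ odd is graded-simple over itself but splits as $\k\times\k$ ungraded when $\operatorname{char}\k\neq 2$). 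To rule this out here you must actually use the extra content of Lemma~\ref{LDeRad}, namely that each $L(i)$ is \emph{absolutely} irreducible: then $J(A)=\bigcap_i\Ann_A(L(i))$ is a homogeneous ideal killing $M$ (graded Nakayama), and $A/J(A)\cong\prod_i\End_\k(L(i))$ is a product of matrix superalgebras of the form $\End_\k(V)$ for $V$ a graded super vector space, whose graded-simple supermodules are exactly the shifts of $V$. Without this input the step simply fails.

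Second, your irredundancy argument does not rule out $\pi L(i)\cong L(i)$. Comparing supports handles the $q$-shift, but "within degree $n_{\max}$ the super-component shifts by $\delta$, and $L(i)$ is nonzero in some parity there" proves nothing: if $\dim L(i)^{n}_{\0}=\dim L(i)^{n}_{\1}$ for all $n$, then $L(i)$ and $\pi L(i)$ have identical graded dimensions, so no dimension or support count can distinguish them. The correct argument uses the heredity data: $e_i\in A^0_{\0}$ (it is a nonzero homogeneous idempotent), and $e_iL(i)=\k\hat v_i$ is one-dimensional concentrated in bidegree $(0,\0)$, as noted in the paper's proof of the subsequent lemma on decomposition numbers. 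A homogeneous isomorphism $q^n\pi^\eps L(i)\to L(i')$ restricts to a bidegree-preserving bijection $q^n\pi^\eps(e_iL(i))\to e_iL(i')$, which forces $i=i'$ (else the target is $0$ by the triangularity of $e_jL(i)$), $n=0$ and $\eps=\0$. Equivalently, $\End_A(L(i))=\k$ is purely even, so $L(i)\not\cong\pi L(i)$. The same observation also cleans up your "matching kernels" step in the completeness direction.
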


\begin{Corollary} %\label{}%{\rm \cite{}}%{\bf ()}
Suppose that $\k$ is a local ring with the maximal ideal $\m$ and the quotient field $F=\k/\m$. Then:
\begin{enumerate}
\item[{\rm (i)}] $A/\m A\cong A\otimes_\k F$ is based quasi-hereditary $F$-superalgebra. 
\item[{\rm (ii)}] For each $i\in I$, denote the corresponding canonical irreducible $A/\m A$-module by  $L_{A/\m A}(i)$ and denote by $L_A(i)$ the $A$-module obtained from $L_{A/\m A}(i)$ by inflation. Then $$\{q^n\pi^\eps L_A(i)\mid i\in I,\ n\in\Z,\ \eps\in\Z/2\}$$ is a complete and irredundant set of irreducible graded $A$-supermodules up to a homogeneous isomorphism.
\end{enumerate} 
\end{Corollary}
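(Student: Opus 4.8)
The plan is to establish (i) by a direct reduction of the heredity data modulo $\m$, and then to deduce (ii) from (i) together with Nakayama's lemma and the classification of graded irreducibles over a field proved in the Lemma immediately preceding this Corollary.

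For (i) I would argue as follows. Since $B$ is a finite $\k$-basis of $A$, the algebra $A$ is a free graded $\k$-supermodule of finite rank, $\m A$ is a homogeneous ideal, and $A/\m A\cong A\otimes_\k F$ is a graded $F$-superalgebra, free over $F$ with basis $\{\bar b:=b+\m A\mid b\in B\}$, where $b\mapsto\bar b$ is a bijection. By Definition~\ref{DCC}(c) we have $x=xe_i\in B$ for each $x\in X(i)$ and $y=e_iy\in B$ for each $y\in Y(i)$, so the sets $\bar X(i):=\{\bar x\mid x\in X(i)\}$ and $\bar Y(i):=\{\bar y\mid y\in Y(i)\}$ consist of distinct nonzero homogeneous elements of $A/\m A$, the $\bar e_i\in\bar X(i)\cap\bar Y(i)$ remain distinguished initial elements, and $\{\bar x\bar y\}$ is exactly the reduced basis $\bar B$. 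Then I would check the three axioms of Definition~\ref{DCC} for $A/\m A$: (a) holds because $\bar B$ is an $F$-basis; (c) reduces verbatim; and for (b) I would note that $A^{>i}$ is spanned by a subset of $B$, hence a $\k$-direct summand of $A$, so its image in $A/\m A$ is precisely $(A/\m A)^{>i}$, and reducing the congruences of Definition~\ref{DCC}(b) modulo $\m A$ (after lifting a given element of $A/\m A$ to $A$) gives the required congruences with scalars $\overline{l^x_{x'}(a)},\,\overline{r^y_{y'}(a)}\in F$. Hence $I,\bar X,\bar Y$ is a heredity data on $A/\m A$.

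For (ii), let $M$ be an irreducible graded $A$-supermodule, so $M\in\mod{A}$ and $M\neq 0$. As $A$ is free of finite rank over $\k$, $M$ is finitely generated as a $\k$-module, so Nakayama's lemma gives $\m M\neq M$; but $\m M$ is a graded $A$-submodule of $M$ (scalars act centrally and in degree $(0,\0)$), so irreducibility forces $\m M=0$. Thus the $A$-action on $M$ factors through $A/\m A$, and $M$ becomes an irreducible object of $\mod{A/\m A}$. By (i) and the Lemma preceding this Corollary applied to the based quasi-hereditary $F$-superalgebra $A/\m A$, we get $M\cong q^n\pi^\eps L_{A/\m A}(i)$ for some $i\in I$, $n\in\Z$, $\eps\in\Z/2$, hence $M\cong q^n\pi^\eps L_A(i)$ after inflation. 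Conversely, each $L_A(i)$ is the inflation of an irreducible $A/\m A$-supermodule and therefore an irreducible $A$-supermodule, so the list is complete. For irredundancy, any homogeneous isomorphism $q^n\pi^\eps L_A(i)\bijection q^{n'}\pi^{\eps'}L_A(j)$ is in particular an isomorphism $q^n\pi^\eps L_{A/\m A}(i)\bijection q^{n'}\pi^{\eps'}L_{A/\m A}(j)$ of graded $A/\m A$-supermodules, so the irredundancy part of the same Lemma over $F$ forces $i=j$, $n=n'$, $\eps=\eps'$.

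The step I expect to require the most care is the transport of Definition~\ref{DCC}(b) in part (i): what makes the reduction clean is that $A^{>i}$ (and more generally $A(\Om)$ for an upper set $\Om$) is spanned by part of the heredity basis and hence a $\k$-direct summand of $A$, so that its image in $A/\m A$ is genuinely $(A/\m A)^{>i}$ rather than something larger. Everything else is a straightforward specialization of the field case to $A/\m A$ followed by inflation, and in particular (ii) uses no input beyond (i), Nakayama's lemma, and the preceding Lemma.
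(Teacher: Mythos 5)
Your proof is correct and follows exactly the argument the paper leaves implicit (the Corollary is stated without proof as an immediate consequence of the preceding Lemma): reduce the heredity data modulo $\m$, using that $A^{>i}$ is spanned by part of the heredity basis so that the axioms of Definition~\ref{DCC} descend, then observe via Nakayama that every irreducible graded $A$-supermodule is killed by $\m$ and apply the field-case Lemma to $A/\m A$. No gaps.
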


If $\k$ is a local ring, we call $A$ {\em basic} if the the modules $L_{A/\m A}(i)$ are $1$-dimensional as $F$-vector spaces, equivalently if the modules $L_{A}(i)$ are free of rank $1$ as $\k$-modules. 

Let $\k$ be a field. Recalling the ring $R$ from (\ref{ER}), we can now consider {\em bigraded decomposition numbers} 
\begin{equation}\label{EDNGr}
d_{ij}(q,\pi):=\sum_{n\in\Z,\,\eps\in\Z/2}d_{ij}^{n,\eps}q^n\pi^\eps\in R \qquad(i,j\in I),
\end{equation}
where
\begin{equation}\label{EDNGr1}
d_{ij}^{n,\eps}:=[\De(i):q^n\pi^\eps L(j)]\qquad(n\in\Z,\,\eps\in\Z/2). 
\end{equation}
%Of course, we have $d_{ii}(q,\pi)=1$, and $d_{ij}(1,1)$ is the usual decomposition number $d_{ij}$ of (\ref{EDN}). 

%The {\em decomposition numbers} for $A$ are  defined to be the multiplicity  \begin{equation}\label{EDN}d_{ij}:=[\De(i):L(j)]\qquad(i,j\in I). \end{equation}

\begin{Lemma} %\label{}%{\rm \cite{}}%{\bf ()}
For $i,j\in I$, we have $d_{ii}(q,\pi)=1$, and $d_{ij}(q,\pi)\neq 0$ implies $j\leq i$. 
\end{Lemma}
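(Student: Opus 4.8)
The plan is to prove the two assertions about the bigraded decomposition matrix by combining the filtration description of $\De(i)$ by ideal quotients with the composition-series data recorded by $L(j)$. First I would fix $i\in I$ and choose a chain of upper sets
$$
I^{\geq i}=\Om_0\supsetneq \Om_1 \supsetneq \cdots \supsetneq \Om_r=\emptyset
$$
refining the principal upper set, with $\Om_{k-1}\setminus\Om_k=\{j_k\}$ for distinct elements $j_k\in I^{\geq i}$, arranged so that $k<l$ implies $j_k\not<j_l$ (possible since $I$ is finite and partially ordered; take a linear extension). Via the isomorphism in \eqref{E121217_2}, each subquotient $A(\Om_{k-1})/A(\Om_k)\cong \De(j_k)\otimes_\k\De^\op(j_k)$, so multiplying on the right by $e_i$ and using Lemma~\ref{idemaction} (which forces $\De^\op(j_k)e_i=0$ unless $i\le j_k$, i.e. $j_k\in I^{\ge i}$, automatically satisfied here) gives a filtration of $\De(i)=\tilde A\tilde e_i$ whose sections are direct sums of copies of $\De(j_k)$ for $j_k\ge i$, with the bottom section ($k=1$, $j_1=i$, chosen minimal in the linear extension) equal to a single copy of $\De(i)$ itself.

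The key step is then: $d_{ij}(q,\pi)\ne 0$ forces $j\le i$. Since composition factors are additive along short exact sequences, it suffices to show $[\De(i):q^n\pi^\eps L(j)]\ne 0$ implies $j\le i$, and for this I would argue directly on $\De(i)=\spa\{\tilde x\mid x\in X(i)\}$. By \eqref{E121217} and Lemma~\ref{idemaction}, if $e_\ell\De(i)\ne 0$ then $\ell\le i$. But $e_j L(j)\ne 0$ by Lemma~\ref{LDeRad} (indeed $e_j$ acts as the identity on the one-dimensional-over-$F$ head of $\De(j)$, cf.\ \eqref{E121217}–\eqref{E121217_1} and \eqref{LEndDe}), so any composition factor isomorphic to a shift of $L(j)$ in $\De(i)$ would give $e_j\De(i)\ne 0$, whence $j\le i$. (Over a general field one should phrase this as: $\operatorname{Hom}(P(j),\De(i))\ne0$ where $P(j)$ is the projective cover of $L(j)$, equivalently $e_j\De(i)\ne 0$ after passing to a basic algebra; but the $e_j$-fixed-space argument above is cleaner.)

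For the normalization $d_{ii}(q,\pi)=1$: the pairing $(\cdot,\cdot)_i$ satisfies $(v_i,w_i)_i=f_i(e_i,e_i)=1$ by \eqref{E121217_3}, so $v_i\notin\rad\De(i)$ and the head $L(i)=\De(i)/\rad\De(i)$ is nonzero, giving a copy of $L(i)$ at the top of $\De(i)$ in degree $q^0\pi^{\0}$ (since $e_i$ is homogeneous of degree $1$ by \eqref{E050817} applied with $x=y=e_i$, so $v_i$ generates $L(i)$ in the trivial degree). Thus $d_{ii}^{0,\0}\ge 1$. To see there is exactly one such factor and no other shift of $L(i)$ occurs, I would use that $e_i\De(i)$ is free of rank $1$ over $\k$ (this is \eqref{LEndDe}, or directly \eqref{E121217}–\eqref{E121217_1}: $e_i v_x=0$ for $x\ne e_i$), hence $\dim_F e_i(\De(i)\otimes_\k F)=1$; since each composition factor $q^n\pi^\eps L(j)$ with $j\le i$ contributes $\dim_F e_i(q^n\pi^\eps L(j))$ to this, and this contribution is $\ge 1$ precisely when $j=i$ (as $e_i L(j)=0$ for $j<i$, again by Lemma~\ref{idemaction} applied to $L(j)=\De(j)/\rad\De(j)$ together with $e_i\De(j)=0$ for $i\not\le j$, i.e.\ for $i<j$; and $e_i L(i)$ is $1$-dimensional), we conclude the multiplicity of $\bigoplus_{n,\eps} q^n\pi^\eps L(i)$ is exactly one and it sits in degree $q^0\pi^{\0}$. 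Hence $d_{ii}(q,\pi)=1$.

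The main obstacle I anticipate is the bookkeeping over a general field where $L(j)$ need not be $1$-dimensional: one must be careful that "$e_j\De(i)\ne0\iff$ some shift of $L(j)$ occurs" uses the fact (from Lemma~\ref{LDeRad} and the surrounding discussion) that $e_j$ does not kill $L(j)$ but does kill $L(\ell)$ for $\ell\not\le j$ — the latter is Lemma~\ref{idemaction} transported to the irreducible quotient, and the former follows from $v_j\notin\rad\De(j)$ together with $e_j v_j=v_j$. Once this dictionary between idempotent actions and composition factors is set up cleanly, both claims are immediate from the filtration of $\De(i)$ by standard modules $\De(j)$, $j\ge i$, with $\De(i)$ appearing exactly once and $e_i$ detecting it.
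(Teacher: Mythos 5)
Your core argument (paragraphs two and three) is correct and is essentially the paper's own proof: $e_jL(j)\neq 0$ because $v_j\notin\rad\De(j)$, while $e_j\De(i)\neq 0$ forces $j\leq i$ by Lemma~\ref{idemaction}, and $e_i\De(i)=\k v_i$ is rank one in degree $q^0\pi^{\bar 0}$, which pins down $d_{ii}(q,\pi)=1$. However, your opening paragraph is wrong as stated: right-multiplying the chain $A(\Om_0)\supseteq A(\Om_1)\supseteq\cdots$ by $e_i$ filters $A^{\geq i}e_i$ (or, starting from $\Om_0=I$, the projective $Ae_i$), not the standard module $\De(i)=\tilde A\tilde e_i$, which cannot have sections $\De(j_k)$ with $j_k>i$ since all its composition factors $L(j)$ satisfy $j\leq i$; fortunately you never use this filtration, so the proof survives with that paragraph deleted. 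One further small point: \eqref{E050817} with $x=y=e_i$ only gives $\deg(e_i)^2=1$, hence $q$-degree $0$; the evenness of $e_i$ comes from $e_i^2=e_i\neq 0$ and $A_{\bar 1}A_{\bar 1}\subseteq A_{\bar 0}$.
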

\begin{proof}
Denote $$\hat v_i:=v_i+\rad\De(i)\in \De(i)/\rad\De(i)=L(i).$$ Then $e_i \De(i)=\k\cdot v_i$ implies $e_i L(i)=\k\cdot \hat v_i$. Moreover, $e_j\De(i)\neq 0$ only if $j\leq i$ implies that $e_jL(i)\neq 0$ only if $j\leq i$. The result follows. 
\end{proof}

\section{Based quasi-hereditary versus split quasi-hereditary}

Throughout the section we assume that $A$ is unital. Our goal is to show that under reasonable assumptions on $\k$, the notion of based quasi-hereditary and split qusi-hereditary are the same. 

\subsection{Based quasi-hereditary algebras are split quasi-hereditary}
Assume that $\k$ is noetherian and $A$ is a graded $\k$-superalgebra, which is finitely generated projective as a $\k$-module. 
The following definition goes back to \cite{CPS,DS}, but we follow 
the version of \cite{Ro}:

\begin{Definition}\label{DSHI} %{\rm \cite{}}%{\bf ()}
{\rm 
A (homogeneous) ideal $J$ of $A$ is called an {\em indecomposable split heredity ideal} if the following conditions hold:
\begin{enumerate}
\item[{\rm (1)}] $A/J$ is projective as a $\k$-module;
\item[{\rm (2)}] $J$ is projective as a left $A$-module;
\item[{\rm (3)}] $J$ is idempotent, i.e. $J^2=J$;
\item[{\rm (4)}] $\End_A(J)$ is Morita equivalent to $\k$. 
\end{enumerate}
}
\end{Definition}

\begin{Definition}\label{DSQH}%{\rm \cite{}}%{\bf ()}
{\rm 
The graded $\k$-superalgebra $A$ is {\em split quasi-hereditary} with respect to a finite partially ordered set $I$ if for every upper set $\Om\subseteq I$ there is an ideal $A(\Om)$ in $A$ such that 
\begin{enumerate}
\item[{\rm (1)}] if $\Om\subseteq \Om'$ are upper sets then $A(\Om)\subseteq A(\Om')$;
\item[{\rm (2)}]  if $\Om\subseteq \Om'$ are upper sets with $|\Om'\setminus\Om|=1$, then $A(\Om)/A(\Om)$ is an indecomposable split heredity ideal in $A/A(\Om)$. 
\end{enumerate}
}
\end{Definition}

\begin{Lemma} %\label{}%{\rm \cite{}}%{\bf ()}
Let $\k$ be noetherian. If $A$ is based quasi-hereditary then it is split quasi-hereditary.
\end{Lemma}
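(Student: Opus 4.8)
The plan is to verify the two conditions of Definition~\ref{DSQH} directly, using the ideals $A(\Om)$ already attached to upper sets $\Om\subseteq I$ by the heredity data. Condition~(1) of Definition~\ref{DSQH} is immediate from Lemma~\ref{L290417}(i). For condition~(2), fix upper sets $\Om\subseteq\Om'$ with $\Om'\setminus\Om=\{i\}$, set $\tilde A:=A/A(\Om)$, and let $J:=A(\Om')/A(\Om)$, which is a homogeneous ideal of $\tilde A$. We must check that $J$ is an indecomposable split heredity ideal of $\tilde A$ in the sense of Definition~\ref{DSHI}.

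First I would record the structural fact, noted in \eqref{E121217_2}, that $J\cong\De(i)\otimes_\k\De^\op(i)$ as $\tilde A$-bimodules, and that $\De(i)$, $\De^\op(i)$, $\tilde A/J\cong A/A(\Om')$ are all free $\k$-modules of finite rank (with homogeneous bases coming from $X(i)$, $Y(i)$, and $B\setminus\{xy:(x,y)\in Z(I^{\geq i})\cap Z(\Om')\}$ respectively, by Definition~\ref{DCC}(a)). Freeness of $\tilde A/J$ gives condition~(1) of Definition~\ref{DSHI}. For condition~(3), idempotency $J^2=J$ follows because $J$ is generated as an ideal by $\tilde e_i$ (Lemma~\ref{LHI}) and $\tilde e_i$ is idempotent, so $\tilde e_i=\tilde e_i\cdot\tilde e_i\in J^2$; more carefully, $J=\tilde A\tilde e_i\tilde A$ and $\tilde e_i\tilde A\tilde e_i\ni\tilde e_i$ gives $\tilde e_i\in J^2$, hence $J=\tilde A\tilde e_i\tilde A\subseteq J^2\subseteq J$. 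For condition~(2), that $J$ is projective as a left $\tilde A$-module: since $J\cong\De(i)^{\oplus\dim\De^\op(i)}$ as left $\tilde A$-modules (choosing a homogeneous $\k$-basis $\{w_y\}$ of $\De^\op(i)$ and using $J\cong\De(i)\otimes_\k\De^\op(i)$), it suffices to show $\De(i)$ is projective. Here I would use that $\De(i)=\tilde A\tilde e_i$ is a direct summand of $\tilde A$ as a left module: indeed $\tilde A\tilde e_i$ is cut out by the idempotent $\tilde e_i$, so $\tilde A=\tilde A\tilde e_i\oplus\tilde A(1-\tilde e_i)$, whence $\De(i)$ is projective.

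For condition~(4) of Definition~\ref{DSHI}, I need $\End_{\tilde A}(J)$ Morita equivalent to $\k$. Using $J\cong\De(i)\otimes_\k\De^\op(i)$ and the fact that $\End_{\tilde A}(\De(i))\cong\k$ by \eqref{LEndDe}, one gets $\End_{\tilde A}(J)\cong\End_{\tilde A}(\De(i)\otimes_\k\De^\op(i))\cong\End_\k(\De^\op(i))\cong M_m(\k)$ where $m=\dim\De^\op(i)$, which is Morita equivalent to $\k$. The one subtlety is the identification of $\End_{\tilde A}(J)$ with an endomorphism algebra of the ``multiplicity space'' $\De^\op(i)$; this uses that every left $\tilde A$-endomorphism of $\De(i)\otimes_\k\De^\op(i)\cong\De(i)^{\oplus m}$ is given by an $m\times m$ matrix over $\End_{\tilde A}(\De(i))\cong\k$, which is standard. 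I expect this Morita/endomorphism-ring computation to be the main point requiring care, since one must keep track of left-module structure versus the $\k$-module factor $\De^\op(i)$; everything else (freeness, idempotency, projectivity via the idempotent $\tilde e_i$) is routine from the definitions and the lemmas already established.
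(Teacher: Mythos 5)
Your proof is correct and follows essentially the same route as the paper: Definition~\ref{DSQH}(1) from the ideal structure, then conditions (1)--(4) of Definition~\ref{DSHI} via the basis of $A/A(\Om')$, the isomorphism \eqref{E121217_2} with \eqref{EDe} for projectivity, the idempotent $\tilde e_i$ for idempotency, and \eqref{LEndDe} for the Morita condition $\End(J)\cong M_m(\k)$. The only blemish is the description of the surviving basis of $A/A(\Om')$, which should be indexed by $Z(I\setminus\Om')$ rather than by $B$ minus $Z(I^{\geq i})\cap Z(\Om')$; this does not affect the argument.
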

\begin{proof}
By Lemma~\ref{LHI}, we have the ideals $A(\Om)$ which clearly satisfy Definition~\ref{DSQH}(1). Let $\Om\subseteq \Om'$ satisfy $\Om'\setminus\Om=\{i\}$. We need to check that the ideal  $A(\Om')/A(\Om)$ in $A/A(\Om)$ satisfies (1)--(4) of Definition~\ref{DSHI}. Note that 
$$\{xy+A(\Om')\mid (x,y)\in Z(I\setminus \Om')\}$$ is a $\k$-basis of $A/A(\Om')$, which gives (1). The property (2) follows from (\ref{E121217_2}) and (\ref{EDe}). The property (3) comes from the fact that $A(\Om')/A(\Om)$ is generated by the idempotent $e_i+A(\Om)$. Finally, by (\ref{E121217_2}) and (\ref{LEndDe}), we have $\End_A(A(\Om')/A(\Om))\cong M_m(\k)$, where $m=|Y(i)|$, which gives (4). 
\end{proof}

\subsection{Split quasi-hereditary algebras are 
based quasi-hereditary}

In this subsection, we assume that the ground ring $\k$ is noetherian and local and that $A$ is a split quasi-hereditary graded superalgebra. In particular, $A$ is a free $\k$-module of finite rank and hence Noetherian. 

In addition we assume that $A$ is {\em  semiperfect}, i.e. $A/J(A)$ is a left Artinian and homogeneous idempotents lift from $A/J(A)$ to $A$, cf. \cite[Definition 3.3]{Das}. By \cite[Theorem 3.5]{Das}, this is equivalent to $A_\0^0$ being semiperfect (in the usual sense). So, as noted in \cite[\S1]{CPS}, $A$ is semiperfect provided $\k$ is complete (local Noetherian). The proof of \cite[(1.3)]{CPS} now goes through to give:

\begin{Lemma} \label{CPS13}
%{\bf ()}
Let $A$ be semiperfect. If $J_1\supseteq\dots \supseteq J_t$ are idempotent ideals in $A$ then there exist idempotents $f_1,\dots,f_t$ in  $A$ such that $J_r=Af_rA$ for all $r$ and $f_rf_s=f_sf_r=f_r$ for all $r>s$.  
\end{Lemma}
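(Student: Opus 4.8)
The plan is to imitate the classical argument of Cline--Parshall--Scott \cite[(1.3)]{CPS}, proceeding by induction on the length $t$ of the chain of idempotent ideals, using the semiperfectness of $A$ at each step to lift idempotents. First I would treat the base case $t=1$: since $J_1$ is an idempotent (homogeneous) ideal in a semiperfect algebra, one shows that $J_1 = Af_1A$ for a homogeneous idempotent $f_1$. This is where semiperfectness is used essentially: $J_1 J(A) \subsetneq J_1$ (by Nakayama, since $J_1$ is finitely generated) and $J_1/J_1J(A)$ is an idempotent ideal in the semisimple algebra $A/J(A)$, hence generated by an idempotent; one then lifts this idempotent through the nilpotent-modulo-semisimple layers, using that homogeneous idempotents lift from $A/J(A)$, and checks that the lifted idempotent $f_1$ satisfies $Af_1A = J_1$ (the containment $Af_1A \subseteq J_1$ is clear, and equality follows from $J_1 = Af_1A + J_1 J(A)$ together with Nakayama applied to the finitely generated module $J_1/Af_1A$).

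For the inductive step, suppose idempotents $f_2,\dots,f_t$ have been produced for the chain $J_2 \supseteq \cdots \supseteq J_t$ with $J_r = Af_rA$ and $f_rf_s = f_sf_r = f_r$ for $t \geq r > s \geq 2$. I would then construct $f_1$ inside the corner algebra $f_2 A f_2$, or rather arrange $f_1$ so that it absorbs $f_2$ on both sides. Concretely: since $J_1 \supseteq J_2 = Af_2A$, the image $\bar J_1$ of $J_1$ in $A/J(A)$ contains the image $\bar f_2$; working in the semisimple quotient one picks an idempotent $\bar f_1$ generating $\bar J_1$ with $\bar f_1 \bar f_2 = \bar f_2 \bar f_1 = \bar f_2$ (possible because $\bar f_2$ lies in the two-sided ideal $\bar J_1$, which as an ideal in a semisimple ring is a sum of matrix blocks, and one enlarges $\bar f_2$ to a full idempotent within those blocks). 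Then lift $\bar f_1$ to a homogeneous idempotent of $A$; the relations $f_1 f_2 = f_2 f_1 = f_2$ can be arranged during the lifting (replace the naive lift $g$ by a conjugate/corrected idempotent so that it commutes appropriately with $f_2$ — this is the standard idempotent-refinement step), and $Af_1A = J_1$ follows as in the base case by Nakayama. Since $f_s = f_2 f_s = f_s f_2$ for $s \geq 2$ already, the new relations $f_1 f_s = f_1 f_2 f_s = f_2 f_s = f_s$ and symmetrically $f_s f_1 = f_s$ follow automatically, so all the required relations $f_r f_s = f_s f_r = f_r$ for $r > s$ hold.

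The main obstacle is the lifting step where one must simultaneously (a) lift an idempotent through the radical filtration keeping it homogeneous and (b) make the lift satisfy the exact relations $f_1 f_2 = f_2 f_1 = f_2$ rather than merely up to the radical. The standard fix is: given $f_2$ and a raw homogeneous idempotent lift $g$ of $\bar f_1$ with $g - \bar{\text{lift}}$ ideas, set $f_1 := f_2 + (1-f_2)g(1-f_2)$ after first correcting $g$ so that $(1-f_2)g(1-f_2)$ is an idempotent orthogonal to $f_2$; one checks $f_1$ is a homogeneous idempotent, $f_1 f_2 = f_2 f_1 = f_2$, and $f_1 \equiv \bar f_1$ modulo $J(A)$ so $Af_1A = J_1$. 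Everything in this construction respects the grading and superstructure because all the pieces ($f_2$, $g$, the multiplications, the radical) are homogeneous, so $f_1 \in A_{\bar 0}^0$. I would then remark that the hypothesis of the lemma is met in our situation (by the paragraph preceding the statement, a semiperfect $A$ over complete local Noetherian $\k$), so the lemma applies to the chains of idempotent ideals $A(\Om)$ arising from a split quasi-hereditary structure, which is the use to which it will be put.
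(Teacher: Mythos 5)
Your argument is essentially the one the paper relies on: the paper offers no proof of this lemma, saying only that the proof of \cite[(1.3)]{CPS} goes through, and your induction with idempotent lifting in the semisimple quotient, the refinement $f_1=f_2+(1-f_2)g(1-f_2)$ to force the absorption relations, and the final Nakayama step is precisely that argument. The one place where ``clear'' hides real work is the containment $Af_1A\subseteq J_1$, i.e.\ that the lifted idempotent actually lies in $J_1$: this follows from $f_1=f_1^{2^n}\in J_1+J(A)^{2^n}$ for all $n$ (using that $J_1$ is a two-sided ideal) together with the separatedness of the $\mathfrak m$-adic topology on the finite $\k$-module $A$ in the complete local Noetherian setting, which is exactly the setting in which the paper applies the lemma.
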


\begin{Proposition}
Assume that \(\k\) is Noetherian and local and that $A$ is a semiperfect graded $\k$-superalgebra. If \(A\) is split quasi-hereditary, then \(A\) is based quasi-hereditary.
\end{Proposition}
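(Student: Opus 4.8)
The plan is to reverse-engineer a heredity basis from the split quasi-hereditary structure by inducting on the size of the poset $I$, peeling off one maximal element at a time. Fix a maximal $i\in I$, so that $\Om:=I^{>i}=\emptyset$ in the bottom case, but in general work relative to an upper set $\Om$ with $\Om\cup\{i\}$ also an upper set, and set $J:=A(\Om\cup\{i\})/A(\Om)$, an indecomposable split heredity ideal in $\bar A:=A/A(\Om)$. The key point is that such an ideal, being idempotent and semiperfect, is generated by an idempotent: by Lemma~\ref{CPS13} applied to the chain of $A(\cdot)$'s one obtains a compatible family of homogeneous idempotents $e_i\in A$ with $A(\Om')=Ae(\Om')A$ for the appropriate idempotent, and $e_ie_j=\de_{i,j}e_i$ after arranging them along a linear refinement of the partial order (this gives axiom (c), at least the idempotent relations). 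One must be a little careful to keep everything homogeneous, using that $A$ is a graded superalgebra and idempotents lift with grading by the semiperfect hypothesis.

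Next I would construct the sets $X(i)$ and $Y(i)$ from the bimodule structure of $J$. Since $\End_{\bar A}(J)$ is Morita equivalent to $\k$ and $\k$ is local, $\End_{\bar A}(J)\cong M_m(\k)$ and $J\cong P\otimes_\k Q$ as an $\bar A$-bimodule, where $P=\bar Ae_i$ is (by conditions (1),(2) of Definition~\ref{DSHI}) a projective $\bar A$-module which is free over $\k$, and $Q=e_i\bar A$ the corresponding right module; concretely $P\cong\De(i)$ and $Q\cong\De^\op(i)$. Choose a homogeneous $\k$-basis $\{v_x\}$ of $e_i$-... of $P$ with $v_{e_i}:=e_i$ distinguished, and lift each $v_x$ to a homogeneous element $x\in A$ with $xe_i=x$; symmetrically produce $Y(i)=\{y\}$ with $e_iy=y$ and $e_i$ itself as the initial element. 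The isomorphism $J\cong P\otimes_\k Q$ then says precisely that $\{xy\mid x\in X(i),y\in Y(i)\}$ is a $\k$-basis of $A(\Om\cup\{i\})/A(\Om)$. Running this over all singleton steps of a fixed maximal chain of upper sets assembles $X=\bigsqcup X(i)$, $Y=\bigsqcup Y(i)$ and, since the $A(\Om)$ filter $A$ with successive quotients spanned by these $xy$, shows that $B=\{xy\mid(x,y)\in Z\}$ is a $\k$-basis of $A$: this is axiom (a).

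It remains to verify axioms (b) and (c). Axiom (b) is the statement that left multiplication by $a\in A$ on $x\in X(i)$, read modulo $A^{>i}=A(I^{>i})$, stays within $\spa X(i)$ with scalars independent of $y$; but modulo $A^{>i}$ the space $A^{\geq i}/A^{>i}$ is exactly $\De(i)\otimes_\k\De^\op(i)$ with $X(i)$ mapping to $\De(i)\otimes w_{e_i}$-type elements, and the left action is the $\De(i)$-action, which is $\k$-linear in the claimed way — so (b) is essentially a restatement of $J\cong P\otimes Q$ as a bimodule together with Definition~\ref{DSQH}(1) comparing the filtrations for $\Om$ and $I^{>i}$. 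The second half of axiom (c), $e_jx\in\{x,0\}$ and $ye_j\in\{y,0\}$, follows from $xe_i=x$ together with $e_jx=e_je_i\cdot(\text{stuff})$ using the idempotent relations and the fact that $x\in\bar Ae_i$ once one is modulo the right ideal; more carefully, it comes from $e_jx$ being a $\k$-multiple of $x$ modulo $A^{>i}$ (by the left-module structure of $\De(i)$, on which $e_j$ acts as $0$ or a projection of rank $\le1$) combined with the linear-order bookkeeping of the $e_j$.

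The main obstacle I expect is the homogeneity/superstructure bookkeeping in the idempotent construction and the choice of bases: Lemma~\ref{CPS13} and the Morita equivalence $\End_{\bar A}(J)\cong M_m(\k)$ must be run inside the graded super category, so one needs that the lifted idempotents, the decomposition $J\cong P\otimes_\k Q$, and the chosen bases $\{v_x\},\{w_y\}$ can all be taken homogeneous — this uses semiperfectness (homogeneous idempotents lift) and the fact that $P$, $Q$ are free graded $\k$-supermodules of finite rank so admit homogeneous bases. A secondary subtlety is ensuring the scalars $l^x_{x'}(a)$ in (b) are genuinely independent of $y$ (and $r^y_{y'}(a)$ independent of $x$), which one gets from the tensor-product form of the bimodule $A^{\geq i}/A^{>i}$ rather than by direct computation; and one must check the degree constraint implicit in calling the elements of $X,Y$ homogeneous, which is automatic from the homogeneous basis choices.
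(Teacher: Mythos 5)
Your overall route is the same as the paper's: refine the partial order to a total order, apply Lemma~\ref{CPS13} to the resulting chain of idempotent ideals to obtain orthogonal homogeneous idempotents $e_i$, identify each layer $A(\Om_i)/A(\Om_{i+1})$ with $\tilde A\tilde e_i\otimes_\k\tilde e_i\tilde A$ using the endomorphism condition $\tilde e_i\tilde A\tilde e_i\cong\k$ (via the isomorphism of the multiplication map and Definition~\ref{DSHI}(4)), and choose homogeneous $\k$-bases of $\tilde A\tilde e_i$ and $\tilde e_i\tilde A$ --- projective over the local ring $\k$, hence free --- to produce $X(i)$ and $Y(i)$ and hence axiom (a).

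The one genuine weak point is your verification of the second half of axiom (c), namely $e_jx\in\{x,0\}$ and $ye_j\in\{y,0\}$. Your stated justification --- that $e_j$ acts on $\De(i)$ as zero or a projection of rank at most one --- is false in general: $e_j\De(i)\cong\tilde e_j\tilde A\tilde e_i$ can have arbitrarily large rank. Moreover axiom (c) is an exact identity in $A$, not a congruence modulo $A^{>i}$, so no argument carried out inside $\De(i)$ alone can deliver it. If you pick an arbitrary homogeneous basis of $\tilde A\tilde e_i$ and lift it subject only to $xe_i=x$, the axiom will simply fail, since a basis vector can have nonzero components in two different Peirce blocks $\tilde e_j\tilde A\tilde e_i$ and $\tilde e_k\tilde A\tilde e_i$. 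The fix, which is exactly what the paper does, is to refine the basis choice: decompose $\tilde A\tilde e_i=\bigoplus_{t}\tilde e_t\tilde A\tilde e_i\oplus\tilde e_*\tilde A\tilde e_i$ with $e_*:=1-e_0-\cdots-e_\ell$, observe that each summand is projective and hence free over the local ring $\k$, choose a homogeneous basis of each summand separately, and lift a basis vector of $\tilde e_t\tilde A\tilde e_i$ to an element of the exact form $e_txe_i$. With that adjustment (and its mirror image for $Y(i)$) your argument goes through and coincides with the paper's.
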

\begin{proof}
We may assume that $I=\{0,1,\dots,\ell\}$ for some $\ell\in\Z_{>0}$ and $0<1<\dots<\ell$ is a total order refining the given partial order on $I$. Then $\Om_i:=\{i,i+1,\dots,\ell\}$ is an upper set for any $i\in I$, and we have a chain 
$$
I=\Omega_0\supseteq \Om_1\supseteq\dots\supseteq \Om_\ell\supseteq \Omega_{\ell+1}:=\varnothing
%\varnothing =: \Omega_{\ell+1} \subset \Omega_\ell \subset \Omega_{\ell-1} \subset \cdots \subset  
$$ 
with \(\Omega_i \backslash \Omega_{i+1} = \{i\}\) for \(i \in I\). By Lemma \ref{CPS13} there exist idempotents \(f_0, \ldots, f_\ell\) such that \(A(\Omega_i)=Af_iA \), and \(f_if_j = f_jf_i = f_i\) whenever \(i>j\). Define \(e_\ell:=f_\ell\) and \(e_i := f_i - f_{i+1}\) for $i=0,1,\dots,\ell-1$. Then for all \(i,j \in I\), we have \(e_ie_j = \delta_{ij}e_i\), and \(f_i= e_i + \cdots + e_\ell\).

Let \(i \in I\), \(\tilde{A}:=A/A(\Omega_{i+1})\) and $\tilde a:=a+A(\Omega_{i+1})\in \tilde A$ for $a\in A$. 
It follows from Definition~\ref{DSHI}(1) that $\tilde A$ is projective as a $\k$-module. Moreover, 
$A(\Omega_i)/A(\Omega_{i+1})$ is projective as an \(\tilde{A}\)-module. Since 
$$A(\Omega_i)/A(\Omega_{i+1}) = \tilde{A}\tilde{f}_i\tilde{A} = \tilde{A}\tilde{e}_i\tilde{A}$$ 
by the previous paragraph,  \cite[Statement 7]{DlRi} implies that the multiplication map
\begin{align*}
m: \tilde{A}\tilde{e}_i \otimes_{\tilde{e}_i\tilde{A}\tilde{e}_i} \tilde{e}_i\tilde{A} \to \tilde{A} \tilde{e}_i \tilde{A}
\end{align*}
is an isomorphism of $\tilde A$-bimodules. By \cite[Lemma 4.5, Proposition 4.7]{Ro}, we have that 
\begin{align*}
\tilde{e}_i\tilde{A}\tilde{e}_i \cong \End_{\tilde{A}}(\tilde{A}\tilde{e}_i)^\op = \End_{A}(\tilde{A}\tilde{e}_i)^\textup{op} \cong \k,
\end{align*}
so \(\tilde{e}_i \tilde{A} \tilde{e}_i = \k \tilde{e}_i\), and \(A(\Omega_i)/A(\Omega_{i+1}) \cong \tilde{A}\tilde{e}_i \otimes_\k \tilde{e}_i \tilde{A}\). 

%we have that \(A(\Omega_i)/A(\Omega_{i-1}) = \tilde{A}\tilde{f}_i\tilde{A} = \tilde{A}\tilde{e}_i\tilde{A}\) is projective as an \(\tilde{A}\)-module, thus is projective as a \(\k\)-module. 

The left \(\tilde A\)-module \(\tilde{A}\tilde{e}_i\) is projective as an \(\tilde A\)-module, hence projective as a \(\k\)-module. Writing \(e_* := 1-e_0 - \cdots -e_\ell\), we have
\begin{align*}
\tilde{A}\tilde{e}_i = \tilde{e}_0\tilde{A}\tilde{e}_i \oplus \cdots \oplus \tilde{e}_\ell \tilde{A}\tilde{e}_i \oplus \tilde{e}_*\tilde{A}\tilde{e}_i.
\end{align*}
Each of the summands above is projective as a \(\k\)-module, hence is free as a \(\k\)-module since \(\k\) is local. Then there exists a set of elements \(X(i) \subset A_{\textup{hom}}\) such that:
\begin{itemize}
\item \(e_i \in X(i)\);
\item \(\{\tilde{x} \mid x \in X(i)\}\) is a \(\k\)-basis for \(\tilde{A}\tilde{e}_i\);
\item For all \(x \in X(i)\), we have \(x=e_t x e_i\) for some \(t \in \{0,\ldots, \ell, *\}\).
\end{itemize}
In similar fashion we may choose a set of elements \(Y(i) \subset A_{\textup{hom}}\) such that:
\begin{itemize}
\item \(e_i \in Y(i)\);
\item \(\{\tilde{y} \mid y \in Y(i)\}\) is a \(\k\)-basis for \(\tilde{e}_i\tilde{A}\);
\item For all \(y \in Y(i)\), we have \(y=e_i y e_t\) for some \(t \in \{0,\ldots, \ell, *\}\).
\end{itemize}
Since $m$ is an isomorphism, \(\{\tilde{x}\tilde{y} \mid x \in X(i),\ y \in Y(i)\}\) is a \(\k\)-basis for \(\tilde{A}\tilde{e}_i \tilde{A} = A(\Omega_i)/A(\Omega_{i+1})\), for all \(i \in I\), which implies that \(\{xy \mid i \in I,\ x \in X(i),\ y \in Y(i)\}\) is a basis for \(A\). 
The remaining conditions of Definition~\ref{DCC} are now easily checked. For example, $e_ix=\de_{x,e_i}x$ for $x\in X(i)$ follows from $\tilde{e}_i\tilde{A}\tilde{e}_i\cong \k \tilde e_i$. 
Thus \(\{I,\ \bigsqcup_i X(i),\ \bigsqcup_i Y(i)\}\) constitutes based quasi-hereditary data for \(A\). 
\end{proof}

\section{Further properties}\label{SQHAReg}
Let $A$ be a based quasi-hereditary $\k$-superalgebra with heredity data $I,X,Y$.  

\subsection{Involution and idempotent truncation}\label{SSIdTr}
If $e\in A$ is a homogeneous idempotent, we consider the idempotent truncation $\bar A:=eAe$, and denote $\bar a:= eae\in\bar A$ for $a\in A$. We say that $e$ is {\em adapted} (with respect to the given heredity data) if for all $i\in I$ there exist subsets $\bar X(i)\subseteq X(i)$ and $\bar Y(i)\subseteq Y(i)$ such that for all $(x,y)\in Z(i)$ we have:
\begin{equation}\label{E230517}
ex= 
\left\{
\begin{array}{ll}
x &\hbox{if $x\in\bar X(i)$,}\\
0 &\hbox{otherwise,}
\end{array}
\right.
\qquad\text{and}\qquad\ \ 
ye=
\left\{
\begin{array}{ll}
y &\hbox{if $y\in\bar Y(i)$,}\\
0 &\hbox{otherwise.}
\end{array}
\right.
\end{equation}
Setting 
\begin{equation}\label{E200717}
\bar I:=\{i\in I\mid \bar X(i)\neq \emptyset\neq \bar Y(i)\},
\end{equation} 
the {\em $e$-truncation of $B$} is defined to be 
\begin{equation}\label{ETrunc}
\bar B:=\{xy\mid i\in \bar I, x\in\bar X(i), y\in\bar Y(i)\}. 
\end{equation} 
We say that $e$ is {\em strongly adapted} if it is adapted and $
ee_i=e_ie= e_i
$
for all $i\in\bar I$.

\begin{Lemma} \label{L200517_4} %{\rm \cite{}}%{\bf ()}
Let $e\in A$ be an adapted idempotent. 
\begin{enumerate}
\item[{\rm (i)}] The $e$-truncation $\bar B$ is a standard basis of $\bar A$ in the sense of Definition~\ref{DSB}.  

\item[{\rm (ii)}] If $\tau$ is a standard anti-involution of $A$ such that $\tau(e)=e$, then $\bar B$ is a cellular basis of $\bar A$ with respect to the restriction $\tau|_{\bar A}$. 

\item[{\rm (iii)}] If $e$ is strongly adapted then $\bar A$ is based quasi-hereditary with heredity data $\bar I$, $\bar X:=\bigsqcup_{i\in \bar I}\bar X(i)$, $\bar Y:=\bigsqcup_{i\in \bar I}\bar Y(i)$. 
\end{enumerate}
\end{Lemma}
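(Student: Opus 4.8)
The plan is to verify the three statements in order, with parts (i) and (ii) being essentially bookkeeping and part (iii) requiring a careful check of the heredity axioms for $\bar A$. Throughout, the key structural fact is that, since $e$ is adapted, multiplication by $e$ on either side of a basis element $xy$ of $B$ either kills it or fixes it: explicitly $e(xy)e = (ex)(ye)$ equals $xy$ if $(x,y)\in\bar X(i)\times\bar Y(i)$ and is $0$ otherwise. Hence $\bar B = \{\bar{(xy)}\mid xy\in B\}\setminus\{0\}$ is a $\k$-basis of $\bar A = eAe$, indexed by $\bar I$ with weight sets $\bar X(i), \bar Y(i)$.

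For part~(i): I would put $\bar A^{>i} := \spa\{xy\mid j>i,\ x\in\bar X(j),\ y\in\bar Y(j)\}$, i.e. the $e$-truncation of $A^{>i}$, and check the two congruences of Definition~\ref{DSB}. Given $\bar a = eae\in\bar A$ and $x\in\bar X(i)$, one computes $\bar a\cdot xy = e a e x y = e(ax)y$ and applies Definition~\ref{DCC}(b): $ax\equiv\sum_{x'\in X(i)}l^x_{x'}(a)x'\pmod{A^{>i}}$. Multiplying on the left by $e$ and on the right by $y\in\bar Y(i)$, the terms with $x'\notin\bar X(i)$ die (as $ex'=0$), the term $A^{>i}$ maps into $\bar A^{>i}$, and we are left with $\bar a\cdot xy\equiv\sum_{x'\in\bar X(i)}l^x_{x'}(eae)\,x'y\pmod{\bar A^{>i}}$, with coefficients independent of $y$ as required. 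The right-hand congruence is symmetric. So $\bar B$ is a standard basis.

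Part~(ii) is then immediate from (i) together with the definition of cellular basis: if $\tau$ is a standard anti-involution of $A$ with $\tau(e)=e$, then $\tau$ restricts to an anti-involution of $\bar A=eAe$, and the standard structure maps $\bar X(i)$ to $\bar Y(i)$ via $x\mapsto y(x)$ (one must note $x\in\bar X(i)\iff y(x)\in\bar Y(i)$, which follows by applying $\tau$ to the defining equations $ex=x$, $ex=0$ and using $\tau(e)=e$, $\tau(x)=y(x)$). By \cite[(6.1.4)]{DR} a standard basis of an algebra with a standard anti-involution is automatically cellular, so $\bar B$ is cellular for $\tau|_{\bar A}$.

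For part~(iii), assume in addition $ee_i=e_ie=e_i$ for $i\in\bar I$; in particular $e_i\in\bar X(i)\cap\bar Y(i)$, so the initial elements survive truncation. I would take $\bar I$ with the induced partial order, $\bar X(i)$, $\bar Y(i)$ as given, and $e_i$ as the initial element of $\bar X(i)\cap\bar Y(i)$, and check axioms (a), (b), (c) of Definition~\ref{DCC} for $\bar A$. Axiom (a) is the basis statement established above. Axiom (b) is exactly the pair of congruences proved in part~(i), once we identify $\bar A(\bar I^{>i})$ with $\bar A^{>i}$: here one uses that the truncation of the ideal $A^{>i}$ is the ideal of $\bar A$ spanned by the truncated basis, which follows from Lemma~\ref{LHI} and the fact that $e$ is adapted. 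For axiom (c), the relations $xe_i = x$, $e_i x=\de_{x,e_i}x$, etc., for $x\in\bar X(i)$, $y\in\bar Y(i)$ are inherited directly from the corresponding relations in $A$ (Definition~\ref{DCC}(c)), since these are identities among elements that all lie in $\bar A$ — using $e_i=e_ie=ee_i$ to see e.g. $xe_i$ computed in $\bar A$ equals $xe_i$ computed in $A$; similarly $e_jx$ (for $j\in\bar I$) equals $x$ or $0$ by the $A$-level relation.

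The main obstacle I anticipate is part~(iii), specifically the bookkeeping needed to show that the ideal-theoretic objects match up under truncation: namely that $e\,A^{>i}\,e$ (resp. $e\,A^{\geq i}\,e$) equals the span of $\bar B\cap(\text{basis elements indexed by }\bar I^{>i})$ and is the two-sided ideal of $\bar A$ generated by $\{e_j\mid j\in\bar I^{>i}\}$. The strong-adaptedness hypothesis $ee_i=e_ie=e_i$ is precisely what is needed here, since it guarantees that $e$ does not collapse the idempotent stratification — without it, truncated $e_i$ could fail to be idempotent or fail to generate the right ideal. Once that identification is in place, axioms (a)–(c) fall out of the corresponding axioms for $A$ by the multiplicativity $e(xy)e=(ex)(ye)$, and the remaining verifications (e.g. $f_i$ restricts correctly, $\bar e_i$ generates $\bar A^{\geq i}/\bar A^{>i}$) are routine.
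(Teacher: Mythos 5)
Your proposal is correct and follows essentially the same route as the paper, whose proof is just a terse version of yours: part (i) from the identity $xy=exye$, part (ii) from the observation that $ex=x$ if and only if $y(x)e=y(x)$ (so $\tau$ matches $\bar X(i)$ with $\bar Y(i)$), and part (iii) by direct verification of the axioms using strong adaptedness. The details you supply — identifying $e A^{>i} e$ with the span of the surviving basis elements indexed by $\bar I^{>i}$, and using $ee_i=e_ie=e_i$ to keep the initial elements — are exactly the ones the paper leaves to the reader.
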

\begin{proof}
(i) follows from $xy=exye$. 
To check (ii) one needs to observe that $ex=x$ if and only if $y(x)e=y(x)$, and so $\bar Y(i)=\{y(x)\mid x\in\bar X(i)\}$. Part (iii) is clear. 
\end{proof}

\iffalse{
\begin{Lemma} %\label{}%{\rm \cite{}}%{\bf ()}
Let $e$ be strongly adapted. Then $\bar A$ is based quasi-hereditary with heredity data $\bar I$, $\bar X:=\bigsqcup_{i\in \bar I}\bar X(i)$, $\bar Y:=\bigsqcup_{i\in \bar I}\bar Y(i)$. 
Moreover, if the heredity data $I,X,Y$ is $\fa$-conforming and $e\in \fa$, then the heredity data $\bar I$, $\bar X$, $\bar Y$ is $\bar\fa$-conforming. 
Finally, if $\bar I=I$, we have $AeA=A$ and so $\bar A$ is Morita equivalent to $A$.  
\end{Lemma}
\begin{proof}

\end{proof}
}\fi

\begin{Remark} %\label{}%{\rm \cite{}}%{\bf ()}
{\rm 
Let $e\in A$ be an adapted idempotent. For $i\in I$,  consider the $\bar A$-module 
$
\bar\De(i):=e\De(i). 
$ 
If $\tau$ is a standard anti-involution of $A$ with $\tau(e)=e$ then by Lemma~\ref{L200517_4}(ii), $\bar A$ is cellular and $\{\bar\De(i)\mid i\in \bar I\}$ are the cell modules for $\bar A$. If $e$ is strongly adapted then by Lemma~\ref{L200517_4}(iii), $\bar A$ is quasi-hereditary and $\{\bar\De(i)\mid i\in \bar I\}$ are the standard modules for $\bar A$. 
}
\end{Remark}

\begin{Remark} %\label{}%{\rm \cite{}}%{\bf ()}
{\rm 
Given a cellular algebra $\bar A$ with cellular basis $\bar B$ and a subalgebra $\bar\a\subseteq \bar A_\0$, is there a based quasi-hereditary algebra $A$ with heredity basis $B$, a standard anti-involution $\tau$ and $\tau$-invariant adapted idempotent $e$ such that $\bar A=eAe$, $\bar\a=e\a e$, and $\bar B$ is the $e$-truncation of $B$? We do not know if this converse of Lemma~\ref{L200517_4}(ii) always holds true. This question seems to be related to problems studied in \cite{Ro,DlRi2,Ko,Aus}. 
}
\end{Remark}

\begin{Lemma} %\label{}%{\rm \cite{}}%{\bf ()}
Let \(\k\) be a field, and $e\in A$ be an adapted idempotent. 
\begin{enumerate}
\item[{\rm (i)}] $eL(i)=0$ if and only if $e\De(i)\subseteq \rad\De(i)$.
\item[{\rm (ii)}] $eL(i)=0$ if and only if $yex\in A^{>i}$ for all $x\in X(i)$ and $y\in Y(i)$.

\item[{\rm (iii)}] $eL(i)=0$ if and only if $yx\in A^{>i}$ for all $x\in \bar X(i)$ and $ y\in\bar Y(i)$.

\item[{\rm (iv)}] $eL(i)=0$ for all $i\in I\setminus \bar I$. 
\end{enumerate}
\end{Lemma}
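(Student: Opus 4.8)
The plan is to prove the four equivalences/implications together, exploiting the form $(\cdot,\cdot)_i$ and the characterization $\rad\De(i)=\{v\mid(v,w)_i=0\ \forall w\}$. Recall that $\De(i)=\spa\{v_x\mid x\in X(i)\}$ with $v_x=xv_i$, and that $e\De(i)=\spa\{ev_x\mid x\in X(i)\}$; since $e$ is adapted, $ev_x=v_x$ if $x\in\bar X(i)$ and (by \eqref{E230517} applied to the element $x\in A$, noting $v_x = \tilde x$ in $\tilde A$) $ev_x = \widetilde{ex}=0$ otherwise. Hence $e\De(i)=\spa\{v_x\mid x\in\bar X(i)\}$, with analogous statements for $\De^\op(i)$ and $\bar Y(i)$.

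First I would prove (i). Over a field, $\rad\De(i)$ is the unique maximal submodule, so $L(i)=\De(i)/\rad\De(i)$ and $eL(i)$ is the image of $e\De(i)$ in $L(i)$; thus $eL(i)=0$ iff $e\De(i)\subseteq\rad\De(i)$. (One should note $e\De(i)$ need not be a submodule, but its image in $L(i)$ is still $eL(i)$, which suffices.) Next, (ii): by the definition of $\rad\De(i)$ via the pairing, $e\De(i)\subseteq\rad\De(i)$ iff $(ev_x,w_y)_i=0$ for all $x\in X(i)$, $y\in Y(i)$. Using part (ii) of the preceding Lemma, $(ev_x,w_y)_i=(v_x,w_ye)_i$, and by \eqref{E210617} together with $w_ya=\sum r^y_{y'}(a)w_{y'}$ one identifies $(v_x,w_ye)_i$ with the coefficient of $e_i$ in $yex$ modulo $A^{>i}$; more directly, $(ev_x,w_y)_i = f_i(y,ex)$ and $yex\equiv f_i(y,ex)e_i\pmod{A^{>i}}$ by the analogue of \eqref{E210617}. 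So $eL(i)=0$ iff $yex\in A^{>i}$ for all $x\in X(i)$, $y\in Y(i)$, giving (ii). For (iii), since $e$ is adapted, $ex\in\{x,0\}$ and $ye\in\{y,0\}$, so $yex$ is either $yx$ (when $x\in\bar X(i)$ and $y\in\bar Y(i)$) or $0$; thus the condition ``$yex\in A^{>i}$ for all $x,y$'' is equivalent to ``$yx\in A^{>i}$ for all $x\in\bar X(i)$, $y\in\bar Y(i)$'', which combined with (ii) yields (iii). Finally (iv) is immediate from the definition \eqref{E200717} of $\bar I$: if $i\in I\setminus\bar I$ then $\bar X(i)=\emptyset$ or $\bar Y(i)=\emptyset$, so the condition in (iii) holds vacuously, whence $eL(i)=0$.

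The only genuinely delicate point is the bookkeeping in (ii) — pinning down that $(ev_x,w_y)_i$ equals the $e_i$-coefficient of $yex$ modulo $A^{>i}$, i.e. that the pairing on $\bar e$-truncated standard modules is computed by the same formula \eqref{E210617}–\eqref{E121217_3} with $x$ replaced by $ex$. This requires only that $e$ is homogeneous and that the action formulas for $\De(i)$, $\De^\op(i)$ are compatible with left/right multiplication by $e$, which is built into $e$ being an idempotent; so I expect no real obstacle, just care with the identification $v_x=\tilde x$, $w_y=\tilde y$ and the fact that adaptedness forces $\widetilde{ex}\in\{\tilde x,0\}$.
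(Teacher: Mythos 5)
Your proposal is correct and follows essentially the same route as the paper: (i) via the image of $e\De(i)$ in the quotient $L(i)=\De(i)/\rad\De(i)$, (ii) by unwinding the definition of the pairing $(\cdot,\cdot)_i$ to the condition $yex\in A^{>i}$, (iii) from adaptedness forcing $ex\in\{x,0\}$ and $ye\in\{y,0\}$, and (iv) from the vacuity of the condition in (iii) when $\bar X(i)$ or $\bar Y(i)$ is empty. The extra care you take in (i)--(ii) (that $e\De(i)$ is not an $A$-submodule, and the identification $(ev_x,w_y)_i=f_i(y,ex)$) is exactly the bookkeeping the paper leaves implicit.
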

\begin{proof}
Part (i) is clear. By part (i), $eL(i)=0$ if and only if $ev_x\in\rad \De(i)$ for all $x\in X(i)$. Recalling the definition of the form $(\cdot,\cdot)_i$, this is equivalent to $yex\in A^{>i}$, proving part (ii). Pari (iii) follows from part (ii) since $ex=\de_{\{x\in \bar X\}}x$ and $ye=\de_{\{y\in \bar Y\}}y$. Finally, if $i\in I\setminus \bar I$ then $\bar X(i)=\varnothing$ or $\bar Y(i)=\varnothing$ (or both). So part (iv) follows from part (iii). 
%$ex=0$ for all $x\in X(i)$ or $ye=0$ for all $y\in Y(i)$ (or both). If $ex=0$ for all $x\in I$ then $e\De(i)=0$, hence $eL(i)=0$. Suppose that $ye=0$ for all $y\in Y(i)$. For any $x\in X(i)$ either $ex=0$ or $ex=x$. If $ex=x$, then $yx=yex=0$ for all $y\in Y(i)$ and so $v_x\in \rad\De(i)$ by (\ref{E210617}). So $L(i)=\De(i)/\rad\De(i)$ is spanned by the images of the vectors $v_x$ with $ex=0$, hence $eL(i)=0$, proving part (iv).  
\end{proof}

\begin{Corollary} %\label{}%{\rm \cite{}}%{\bf ()}
Let \(\k\) be a field, and $e\in A$ be an adapted idempotent. Then there exists a subset $\bar I'\subseteq\bar I$ such that $\{eL(i)\mid i\in \bar I'$ is a complete and irredundant set of irreducible $\bar A$-modules up to isomorphism. 
\end{Corollary}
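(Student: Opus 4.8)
The plan is to recognize this Corollary as an instance of the classical \emph{idempotent truncation} (\emph{Schur functor}) dictionary, applied to the complete list of irreducibles supplied by Lemma~\ref{LDeRad}. First I would record that $A$ is finite-dimensional over the field $\k$: by Definition~\ref{DCC}(a) its basis $B=\{xy\mid(x,y)\in Z\}$ is finite, since $I$ and all the $X(i),Y(i)$ are. Hence $\bar A=eAe$ is a finite-dimensional $\k$-algebra with two-sided identity $e$, and each $L(i)=\De(i)/\rad\De(i)$ is finite-dimensional, being a quotient of $\De(i)=\spa\{\tilde x\mid x\in X(i)\}$.

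Next I would invoke the standard fact about the exact functor $V\mapsto eV\cong\Hom_A(Ae,V)$ from $A$-modules to $\bar A$-modules: it sends every irreducible $A$-module to $0$ or to an irreducible $\bar A$-module; if $eL,eL'\neq 0$ then $eL\cong eL'$ forces $L\cong L'$; and every irreducible $\bar A$-module has the form $eL$. (Irreducibility of a non-zero $eL$ is immediate: a non-zero $\bar A$-submodule $N\subseteq eL$ satisfies $N=eN$, so $AN$ is a non-zero $A$-submodule of $L$, hence $AN=L$ and thus $eAe\,N=eAN=eL$, forcing $N=eL$; the injectivity and surjectivity statements follow from the fact that $L$ is the unique irreducible quotient of $Ae\otimes_{\bar A}eL$ and that $e(Ae\otimes_{\bar A}S)\cong S$ for each $\bar A$-module $S$.) Consequently, for any complete and irredundant set $\{L_\al\}$ of irreducible $A$-modules, $\{eL_\al\mid eL_\al\neq 0\}$ is a complete and irredundant set of irreducible $\bar A$-modules.

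I would then take the complete and irredundant set $\{L(i)\mid i\in I\}$ from Lemma~\ref{LDeRad} and set
$$\bar I':=\{i\in I\mid eL(i)\neq 0\}.$$
By part~(iv) of the Lemma immediately preceding this Corollary, $eL(i)=0$ for every $i\in I\setminus\bar I$, so $\bar I'\subseteq\bar I$; and the previous paragraph identifies $\{eL(i)\mid i\in\bar I'\}$ as the asserted complete and irredundant set of irreducible $\bar A$-modules.

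No serious obstacle arises; the points meriting a line of care are that — as in Lemma~\ref{LDeRad} — the Corollary is read up to \emph{ungraded} isomorphism, so the classical (ungraded) truncation dictionary applies directly, the homogeneity of $e$ being used only to see that each $eL(i)$ is again a graded $\bar A$-supermodule; and that, although $A$ is not assumed unital in this section, the argument above uses nothing beyond finite-dimensionality of $A$ and the fact that the $L(i)$ are (irreducible) $A$-modules, so it carries over unchanged.
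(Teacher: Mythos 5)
Your argument is correct and is exactly the intended one: the paper leaves this Corollary without a written proof precisely because it is the standard idempotent-truncation dictionary ($V\mapsto eV$ sends irreducibles to irreducibles or zero, detects isomorphism, and hits every irreducible $\bar A$-module) combined with part (iv) of the immediately preceding Lemma to get $\bar I'\subseteq\bar I$. Your extra care about finite-dimensionality, the ungraded reading, and the non-unital setting is sensible but does not change the route.
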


\subsection{Conformity}

We now turn to more subtle additional properties of heredity data, which have to do with the super-structure. 
%By Definition~\ref{DCC}, for every $i\in I$, we have $e_i\in A_\0^0$. 
Recalling (\ref{EH}), we have sets $B_\eps, X(i)_\eps, Y_\eps$ etc. 

%Recall that all algebras considered in this paper are assumed to have identities. However, for a subalgebra $\fa\subseteq A$ we do not insist that $1_\fa=1_A$. Furthermore, given a subset $S\subseteq A_{\operatorname{hom}}$ the subalgebra $\lan S\ran$ generated by $S$ might not have 

\begin{Definition}\label{D290517}%{\rm \cite{}}%{\bf ()}
{\rm 
%We say that the heredity data $I,X,Y$ of $A$ is {\em conforming} if  $I,X_\0,Y_\0$ is a heredity data for the subalgebra generated by $
Suppose that $\fa\subseteq A_\0$ is a subalgebra. The heredity data $I,X,Y$ of $A$ is {\em $\fa$-conforming} if $I,X_\0,Y_\0$ is a heredity data for $\fa$. 
}
\end{Definition}

If the heredity data $I,X,Y$ of $A$ is $\fa$-conforming then $\fa$ is recovered as follows:
$$ 
\fa=\spa(xy\mid i\in I,\ x\in X(i)_\0,\ y\in Y(i)_\0).
$$
So sometimes we will just speak of a {\em conforming heredity data}. Even though in some sense $\fa$ is redundant in the definition of conormity, it is often convenient to use it. For example, in \cite{greenTwo}, we will construct generalized Schur algebras $T^A_\fa(n,d)$, which will only depend on $A$ and $\fa$, but not on $I,X,Y$.

Recall that we have standard $A$-modules $\De(i)$ and simple $A$-modules $L(i)$ (if $\k$ is a field). If the heredity data $I,X,Y$ of $A$ is $\fa$-conforming then by definition $\fa$ is also based quasi-hereditary and has its own standard $\fa$-modules $\De_\fa(i)$ and simple $\fa$-modules $L_\fa(i)$ (if $\k$ is a field). 

%If the superstructure on $A$ is trivial, i.e. $A=A_\0$, then the only possible choice is $\fa=A$. 

We describe an additional property which implies  conformity. This property is readily checked in some important examples and will be preserved under formation of  the generalized Schur algebra $T^A_\fa(n,d)$. The following is easy to see:

\begin{Lemma} %\label{}%{\rm \cite{}}%{\bf ()}
Suppose that $A$ possesses a $(\Z/2\times \Z/2)$-grading 
$
A=\bigoplus_{\eps,\de\in\Z/2} A_{\eps,\de}
$
such that the following conditions hold:
\begin{enumerate}
\item[{\rm (1)}] $A_{\eps,\de}A_{\eps',\de'}\subseteq A_{\eps+\eps',\de+\de'}$ for all $\eps,\de,\eps',\de'\in\Z/2$;
\item[{\rm (2)}] For all $\eps\in\Z/2$, we have $A_\eps=\bigoplus_{\eps'+\eps''=\eps} A_{\eps',\eps''}$. 
\item[{\rm (3)}] $X_\eps\subseteq A_{\eps,\0}$ and $Y_\eps\subseteq A_{\0,\eps}$ for all $\eps\in\Z/2$. 
\end{enumerate}
Then the heredity data $I,X,Y$ is $\fa$-conforming for $\fa=A_{\0,\0}$. 
\end{Lemma}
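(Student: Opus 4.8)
The plan is to verify the three axioms of Definition~\ref{DCC} directly for the triple $I$, $X_\0=\bigsqcup_i X(i)_\0$, $Y_\0=\bigsqcup_i Y(i)_\0$ with the same initial elements $e_i$, taking $\fa=A_{\0,\0}$; by (1) and (2), $\fa$ is a subalgebra of $A_\0$, as Definition~\ref{D290517} requires.

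The main preliminary step is to record that the heredity basis $B$ is homogeneous for the $(\Z/2\times\Z/2)$-grading: by (3) and (1), a basis element $xy$ with $x\in X(i)_\eps$, $y\in Y(i)_\de$ lies in $A_{\eps,\0}A_{\0,\de}\subseteq A_{\eps,\de}$. Since $B$ is a basis of $A=\bigoplus_{\eps,\de}A_{\eps,\de}$ and its elements therefore partition into the distinct summands, each block $\{xy\mid i\in I,\ x\in X(i)_\eps,\ y\in Y(i)_\de\}$ is a basis of $A_{\eps,\de}$. The case $\eps=\de=\0$ is exactly axiom~(a) for $\fa$; and restricting the span to indices $j>i$ (or to any upper set $\Om$) identifies the ideal $\fa^{>i}$ (resp.\ $\fa(\Om)$) built from the restricted data with the $(\0,\0)$-component $(A^{>i})_{\0,\0}$ (resp.\ $A(\Om)_{\0,\0}$). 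The same homogeneity remark applied to the initial elements shows $e_i\in A_{\eps,\0}\cap A_{\0,\de}$ for appropriate $\eps,\de$, which by directness of the grading and $e_i\neq0$ forces $e_i\in X(i)_\0\cap Y(i)_\0$, as needed.

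For axiom~(b), I would take $i\in I$, $x\in X(i)_\0$, $a\in\fa$, apply Definition~\ref{DCC}(b) for $A$, split the sum over $X(i)$ into its even and odd parts, and project the resulting membership in $A^{>i}$ onto the $(\0,\0)$-component: by (1) and (3), $ax$ and the terms $l^x_{x'}(a)x'$ with $x'\in X(i)_\0$ lie in $A_{\0,\0}$, the terms with $x'\in X(i)_\1$ lie in $A_{\1,\0}$, and $(A^{>i})_{\0,\0}=\fa^{>i}$; hence the $(\0,\0)$-part of the congruence reads $ax\equiv\sum_{x'\in X(i)_\0}l^x_{x'}(a)x'\pmod{\fa^{>i}}$ with the same structure constants, and the statement for $ya$ is symmetric. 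Axiom~(c) needs no argument at all, since every identity there already holds in $A$ for arbitrary $x\in X(i)$, $y\in Y(i)$, $j\in I$, and $X(i)_\0\subseteq X(i)$, $Y(i)_\0\subseteq Y(i)$, $e_j\in\fa$.

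There is no genuine obstacle here; the only point requiring attention is the projection step in axiom~(b), and this is precisely where all three hypotheses enter together. Condition~(2) makes the $(\Z/2\times\Z/2)$-grading refine the superstructure, so that $A_\0=A_{\0,\0}\oplus A_{\1,\1}$ and in particular $ax\in A_{\0,\0}$; condition~(3) puts $X(i)_\0$ and $Y(i)_\0$ in $A_{\0,\0}$ but $X(i)_\1$ in $A_{\1,\0}$ and $Y(i)_\1$ in $A_{\0,\1}$, so the even and odd terms sit in different homogeneous components; and condition~(1) is what makes all the relevant products land where claimed. Everything else is bookkeeping.
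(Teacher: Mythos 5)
Your verification is correct, and it is exactly the direct check the paper has in mind: the authors state this lemma with ``The following is easy to see'' and give no proof, and your argument (bihomogeneity of the heredity basis, the identification $(A^{>i})_{\0,\0}=\fa^{>i}$, and projection of the congruences in Definition~\ref{DCC}(b) onto the $(\0,\0)$-component) supplies precisely the omitted details, including the small but necessary point that $e_i\in A_{\eps,\0}\cap A_{\0,\de}$ forces $e_i\in X(i)_\0\cap Y(i)_\0$.
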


\subsection{Morita equivalence}
Throughout the section, we assume that $\k$ is %Noetherian and 
local. 
We also assume that \(A\) is a unital based quasi-hereditary graded \(\k\)-superalgebra with heredity data \(I,X,Y\) which is $\fa$-conforming for a unital subalgebra $\a$, in particular, $I,X_\0,Y_\0$ is a heredity data for $\a$ and $1_\a=1_A$.

Our goal is to find an idempotent $f\in\fa$ such that  $\bar A:=fAf$ is based quasi-hereditary with $\bar\fa$-conforming hereditary data, where $\bar \fa:=f\fa f$ is basic and the functors
$$
\funF_A:\mod{A}\to\mod{\bar A},\ V\mapsto f V
\quad\text{and}\quad 
\funF_\mathfrak{a}:\mod{\mathfrak{a}}\to\mod{\bar{\mathfrak{a}}},\ V\mapsto fV
$$ 
are equivalences of categories, such that 
\begin{align*}
&\funF_A(L_A(i))\cong L_{\bar{A}}(i), 
&\funF_A(\De_A(i))\cong \De_{\bar{A}}(i),
\\ 
&\funF_\mathfrak{a}(L_\mathfrak{a}(i))\cong L_{\bar{\mathfrak{a}}}(i),  
&\funF_\mathfrak{a}(\De_\mathfrak{a}(i))\cong \De_{\bar{\mathfrak{a}}}(i).
\end{align*}

The first step allows us to reduce to the situation where $\sum_{i\in I}e_i=1_A=1_\fa$: 

\begin{Lemma} \label{L221217} %{\rm \cite{}}%{\bf ()}
Let $e:=\sum_{i\in I} e_i$. Then $\bar A:=eAe$ is based quasi-hereditary with $\bar\fa$-conforming hereditary data, where $\bar \fa:=e\fa e$ and the functors
$$
\funF_A:\mod{A}\to\mod{\bar A},\ V\mapsto e V
\quad\text{and}\quad 
\funF_\mathfrak{a}:\mod{\mathfrak{a}}\to\mod{\bar{\mathfrak{a}}},\ V\mapsto eV
$$ 
are equivalences of categories, such that 
\begin{align*}
&\funF_A(L_A(i))\cong L_{\bar{A}}(i), 
&\funF_A(\De_A(i))\cong \De_{\bar{A}}(i),
\\ 
&\funF_\mathfrak{a}(L_\mathfrak{a}(i))\cong L_{\bar{\mathfrak{a}}}(i),  
&\funF_\mathfrak{a}(\De_\mathfrak{a}(i))\cong \De_{\bar{\mathfrak{a}}}(i).
\end{align*}
\end{Lemma}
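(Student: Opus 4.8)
The plan is to verify first that $e=\sum_{i\in I}e_i$ is a strongly adapted idempotent in the sense of Subsection~\ref{SSIdTr}, so that Lemma~\ref{L200517_4}(iii) immediately hands us the based quasi-hereditary structure on $\bar A=eAe$. For adaptedness, set $\bar X(i):=\{x\in X(i)\mid ex=x\}$ and $\bar Y(i):=\{y\in Y(i)\mid ye=y\}$; since by Definition~\ref{DCC}(c) every $x\in X$ satisfies $e_jx=x$ or $0$ for each $j$, and Lemma~\ref{idemaction} pins down which, one checks that $ex=\sum_{j\in I}e_jx$ equals $x$ if the (unique) $t$ with $e_tx=x$ lies in $I$ and equals $0$ otherwise — and in fact $e_ix=x$ already by Definition~\ref{DCC}(c), so $x\in\bar X(i)$ always, i.e. $\bar X(i)=X(i)$ and likewise $\bar Y(i)=Y(i)$, whence $\bar I=I$ and $\bar B=B$. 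Strong adaptedness is the assertion $ee_i=e_ie=e_i$ for all $i\in I$, which is immediate from $e_ie_j=\delta_{ij}e_i$. So Lemma~\ref{L200517_4}(iii) gives that $\bar A$ is based quasi-hereditary with heredity data $I,X,Y$ (now viewed inside $\bar A$ via $xy=exye$), and the same argument applied inside $\fa$ — using that $e\in\fa$ since each $e_i=e_ie_i\in\spa\{xy\mid x\in X(i)_\0,y\in Y(i)_\0\}=\fa$ (as $e_i\in X(i)_\0\cap Y(i)_\0$) — shows $\bar\fa=e\fa e$ is based quasi-hereditary with data $I,X_\0,Y_\0$; thus the heredity data of $\bar A$ is $\bar\fa$-conforming.

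Next I would establish that $\funF_A=e(-):\mod{A}\to\mod{\bar A}$ is an equivalence. The standard criterion is $AeA=A$: indeed $AeA\ni xy=xe_iy=x(e_ie)(-)\cdots$, more precisely $xy=xe_iy=x e_i e \, e_i y$ lies in $AeA$ for every $(x,y)\in Z(i)$, and these span $A$ by Definition~\ref{DCC}(a), so $AeA=A$. Then by the classical idempotent-truncation Morita theorem (valid here since $\k$ is local and $A$ is a finite-rank free $\k$-module, hence semiperfect-type arguments apply, cf.\ the references already invoked in the split-vs-based section), $\funF_A$ is an equivalence with quasi-inverse $Ae\otimes_{\bar A}(-)$. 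The identical argument with $\fa$ in place of $A$ — using $\fa e\fa=\fa$, which follows from $xy=xe_iy$ for $(x,y)\in Z(i)$ with $x\in X(i)_\0$, $y\in Y(i)_\0$ — shows $\funF_\fa$ is an equivalence.

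It remains to match up the distinguished modules. For the standard modules: $\De_A(i)=\tilde A\tilde e_i$ where $\tilde A=A/A(I^{>i})$, and $e$ acts as the identity on $\tilde e_i$ and fixes every $\tilde x$ ($x\in X(i)$) since $ex=x$; hence $e\De_A(i)=\De_A(i)$ as a $\k$-module, and under the identification $\bar A=eAe$ with its heredity data this is exactly $\De_{\bar A}(i)=\tilde{\bar A}\tilde e_i$ — concretely, $\De_{\bar A}(i)$ has $\k$-basis $\{\tilde x\mid x\in\bar X(i)\}=\{\tilde x\mid x\in X(i)\}$ with the action read off from Definition~\ref{DCC}(b) computed in $\bar A$, which agrees with the action in $A$ because $axe=\sum l^x_{x'}(a)x' \ (\mathrm{mod}\ A^{>i})$ and truncating by $e$ is harmless. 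So $\funF_A(\De_A(i))\cong\De_{\bar A}(i)$, and similarly $\funF_\fa(\De_\fa(i))\cong\De_{\bar\fa}(i)$. For the irreducibles (here $\k$ a field, or $\k$ local via the Corollary after Lemma~\ref{LDeRad}): an equivalence of categories sends a complete irredundant set of simples to a complete irredundant set, and $eL(i)\neq0$ for all $i$ because $e_i L(i)\neq0$ (shown in the proof of the last Lemma of Subsection~\ref{SSDelta}) and $e_iL(i)=e_ieL(i)$; matching the labels then comes from the fact that $\funF_A(\De_A(i))=\De_{\bar A}(i)$ has simple head $L_{\bar A}(i)$, while $\funF_A$ applied to the head of $\De_A(i)$, namely $L_A(i)$, gives a simple $\bar A$-module that must therefore be $L_{\bar A}(i)$. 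The same reasoning handles $L_\fa$.

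The main obstacle I anticipate is the bookkeeping in the second step: although "$AeA=A$ implies Morita equivalence" is standard over a field, here we need it over a local (eventually complete local Noetherian) ring with $A$ a finite-rank free $\k$-module, and one must either cite the appropriate form of this statement (as in \cite{DlRi,Ro} used earlier) or check that $Ae$ is a projective generator with $\End_A(Ae)\cong\bar A^{\mathrm{op}}$ in this setting. The compatibility with super-structure and grading is automatic since $e$ is homogeneous (of degree $q^0\pi^\0$), so all functors and identifications are homogeneous.
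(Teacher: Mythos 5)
Your overall route is the same as the paper's: the paper's entire proof is the observation that $e=\sum_{i\in I}e_i$ is strongly adapted, so that Lemma~\ref{L200517_4}(iii) applies, together with the Morita argument you spell out via $AeA=A$. However, your verification of adaptedness contains a genuine error. You claim that $e_ix=x$ for every $x\in X(i)$ ``already by Definition~\ref{DCC}(c)'' and conclude that $\bar X(i)=X(i)$ and $\bar B=B$. But axiom (c) says $e_ix=\de_{x,e_i}x$: \emph{left} multiplication by $e_i$ kills every $x\in X(i)$ except $x=e_i$; it is \emph{right} multiplication, $xe_i=x$, that always fixes $x$. For a general $j$ the axiom only gives $e_jx\in\{x,0\}$, and there need be no $j\in I$ at all with $e_jx=x$ (this happens exactly when $x=e_*xe_i$ for an ``external'' idempotent $e_*=1_A-\sum_j e_j$, as in the constructions of Section 3 and of Lemma~\ref{makeprim}). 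So the correct definition is $\bar X(i)=\{x\in X(i)\mid ex=x\}$, which may be a proper subset of $X(i)$. Indeed, if $\bar B=B$ always held, then $eAe$ would contain a basis of $A$ and $e=1_A$, making the lemma vacuous, whereas its stated purpose is to reduce to the case $\sum_i e_i=1_A$.

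The error is localized, and the rest of your argument goes through with the corrected $\bar X(i),\bar Y(i)$: at most one $j$ satisfies $e_jx=x$ (since $e_je_k=0$ for $j\neq k$), so $ex\in\{x,0\}$ and $e$ is adapted; $e_i\in\bar X(i)\cap\bar Y(i)$ still gives $\bar I=I$; $ee_i=e_i=e_ie$ gives strong adaptedness; $xy=xe_iy=xe_i\,e\,e_iy$ still yields $AeA=A$ and $\fa e\fa=\fa$; and $e\De_A(i)$ is the span of $\{v_x\mid x\in\bar X(i)\}$, which is exactly $\De_{\bar A}(i)$ but is not all of $\De_A(i)$ in general. With those corrections your write-up fills in precisely what the paper's one-line proof leaves implicit.
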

\begin{proof}
This follows using Lemma~\ref{L200517_4} since $e$ is strongly adapted. 
\end{proof}

\begin{Lemma}\label{makeprim}
%Let $\k$ be local and \(A\) be a based quasi-hereditary graded \(\k\)-superalgebra with heredity data \(I,X,Y\) which is $\fa$-conforming and $1_\fa=1_A$. 
%, and suppose that $\fa$ is semiperfect. 
There exists an \(\mathfrak{a}\)-conforming heredity data \(I,X',Y'\) for \(A\) with  the same ideals \(A(\Omega)\) and \(\mathfrak{a}(\Omega)\), and such that the new initial elements \(\{e'_i \mid i \in I\}\) are primitive idempotents in \(\mathfrak{a}\)  satisfying $e_ie_i'=e_i'=e_i'e_i$ 
and $e_i'\equiv e_i\pmod{\a^{>i}}$ for all $i\in I$.
\end{Lemma}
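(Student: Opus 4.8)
The plan is to first replace each initial idempotent $e_i$ by a primitive idempotent of $\fa$, and then rebuild the index sets around the new idempotents. Note that $e_i\in\fa$ for all $i$ (it lies in $X(i)_\0\cap Y(i)_\0$), that $\fa$ is itself based quasi-hereditary with heredity data $I,X_\0,Y_\0$, and put $\fa^{>i}:=\fa(I^{>i})$, $\tilde\fa:=\fa/\fa^{>i}$, $\tilde A:=A/A^{>i}$; let $\De_\fa(i)$ denote the standard $\fa$-modules. Applying $(\ref{LEndDe})$ both to $A$ and to $\fa$ gives $\tilde e_i\tilde\fa\tilde e_i\cong\k\cong\tilde e_i\tilde A\tilde e_i$, and since $\k$ is local this forces $\tilde e_i\tilde\fa\tilde e_i=\k\tilde e_i$ and $\tilde e_i\tilde A\tilde e_i=\k\tilde e_i$, makes $\De_\fa(i)$ indecomposable, and makes $\bar e_i:=e_i+\fa^{>i}$ a primitive idempotent of $\tilde\fa$.

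The conceptual step is to produce $e'_i$. Decompose the finitely generated $\fa$-module $\fa e_i$ as a direct sum $Q_1\oplus\cdots\oplus Q_r$ of indecomposable $\fa$-modules; applying $\tilde\fa\otimes_\fa-$ gives $\De_\fa(i)=\bar Q_1\oplus\cdots\oplus\bar Q_r$, so, as $\De_\fa(i)$ is indecomposable, exactly one $\bar Q_k$ is nonzero and every other $Q_k$ satisfies $Q_k=\fa^{>i}Q_k$. This decomposition comes from a decomposition $e_i=e^{(1)}_i+\cdots+e^{(r)}_i$ into pairwise orthogonal primitive idempotents $e^{(k)}_i$ of the ring $e_i\fa e_i$, with $Q_k=\fa e^{(k)}_i$; the degenerate summands have $e^{(k)}_i\in\fa^{>i}$, so there is exactly one, $e'_i:=e^{(k_0)}_i$, with $e'_i\notin\fa^{>i}$, and then $e_i-e'_i\in\fa^{>i}$, i.e.\ $e'_i\equiv e_i\pmod{\fa^{>i}}$. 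Since $\fa e'_i=Q_{k_0}$ is indecomposable, $e'_i$ is primitive in $\fa$, and $e_ie'_i=e'_ie_i=e'_i$ because $e'_i\in e_i\fa e_i$. Finally, $e_ie_j=\de_{ij}e_i$ forces the $e'_i$ ($i\in I$) to be pairwise orthogonal, so $e'_*:=1-\sum_{i\in I}e'_i$ is an idempotent. No semiperfectness of $A$ or $\fa$ is needed here; the indecomposability of $\De_\fa(i)$ does the work.

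Next I would rebuild the heredity data exactly as in the proof that split quasi-hereditary algebras are based quasi-hereditary, but using the $e'_i$ in place of the $e_i$. Fix $i$. As $e'_i\equiv e_i$ modulo $\fa^{>i}\subseteq A^{>i}$ we have $\tilde e'_i=\tilde e_i$ in $\tilde A$, so $\De(i)=\tilde A\tilde e'_i$ and $\tilde e'_i\tilde A\tilde e'_i=\k\tilde e'_i$. Write $\De(i)=\bigoplus_{t\in I}\tilde e'_t\tilde A\tilde e'_i\;\oplus\;\tilde e'_*\tilde A\tilde e'_i$, a sum of graded free $\k$-modules, and let $X'(i)$ consist of $e'_i$ together with, for each homogeneous $\k$-basis element $\bar x$ of a summand $\tilde e'_t\tilde A\tilde e'_i$ with $t\ne i$ (resp.\ of $\tilde e'_*\tilde A\tilde e'_i$), a homogeneous lift $x\in A$ with $x=e'_txe'_i$ (resp.\ $x=e'_*xe'_i$). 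The point requiring care is conformity: $\fa$-conformity of the original data gives $\De(i)_\0=\De_\fa(i)$ (this even part is spanned by $\{\tilde x\mid x\in X(i)_\0\}$), so the even basis elements above, and their lifts, may be — and are — chosen inside $\fa$; then $X'(i)_\0\subseteq\fa$ and $\{\tilde x\mid x\in X'(i)_\0\}$ is a $\k$-basis of $\De_\fa(i)$. Define $Y'(i)$ symmetrically from $\De^\op(i)$, again with initial element $e'_i$.

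Finally I would verify the axioms of Definition~\ref{DCC} for $(I,X',Y')$ and conformity. Axiom (c) is immediate from the row structure $x=e'_txe'_i$ and pairwise orthogonality of $\{e'_j\mid j\in I\}\cup\{e'_*\}$; axiom (b) holds because $\{\tilde x\mid x\in X'(i)\}$ is by construction a $\k$-basis of the $A$-module $\De(i)$, and dually for $Y'(i)$. For axiom (a) and the claim that the ideals $A(\Om)$ are unchanged: $(\ref{E121217_2})$ identifies $A^{\ge i}/A^{>i}$ with $\De(i)\otimes_\k\De^\op(i)$ via multiplication, so $\{x'y'\mid x'\in X'(i),\ y'\in Y'(i)\}$ reduces to a $\k$-basis of $A^{\ge i}/A^{>i}$ for each $i$; a downward induction on upper sets (remove a minimal element, using Lemma~\ref{L290417}) then shows $A(\Om)=\spa\{x'y'\mid x'\in X'(j),\ y'\in Y'(j),\ j\in\Om\}$ for every upper set $\Om$, and the case $\Om=I$ gives that $B'$ is a basis of $A$. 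Running the same induction inside $\fa$, using $(\ref{E121217_2})$ for $\fa$ together with $\De(i)_\0=\De_\fa(i)$, shows the even products form a basis of $\fa$ with the ideals $\fa(\Om)$ unchanged, i.e.\ the new data is $\fa$-conforming. The main obstacle is the summand count of the second paragraph — this is exactly what makes $e'_i$ primitive with no semiperfectness hypothesis; the rest, especially the bookkeeping for axiom (a), is routine but somewhat lengthy.
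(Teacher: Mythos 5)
Your proposal is correct and follows essentially the same route as the paper: extract the unique primitive idempotent summand $e_i'$ of $e_i$ in $\fa$ that survives modulo $\fa^{>i}$ (the paper sees this via $\tilde e_i\tilde\fa\tilde e_i\cong\k$ being local, you via indecomposability of $\De_\fa(i)$ — the same fact), then rebuild $X'(i),Y'(i)$ by lifting homogeneous bases of the Peirce summands $\tilde e_j'\tilde A\tilde e_i'$ with the even lifts taken in $\fa$, and verify the axioms via the multiplication isomorphism onto $A^{\geq i}/A^{>i}$ (your appeal to (\ref{E121217_2}) plays the role of the paper's citation of Dlab–Ringel). The only differences are presentational.
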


\begin{proof}
%We may assume that $I=\{0,1,\dots,\ell\}$ for some $\ell\in\Z_{>0}$ and $0>1>\dots>\ell$ is a total order refining the given partial order on $I$. Then $\Om_i:=\{0,1,\dots,i\}$ is a coideal for any $i\in I$, and we have a chain \(\varnothing = \Omega_{-1} \subset \Omega_0 \subset \Omega_1 \subset \cdots \subset \Omega_\ell = I\) with \(\Omega_i \backslash \Omega_{i-1} = \{i\}\) for all \(i \in I\). 
Let $i\in I$. Set $\tilde \fa:=\fa/\fa^{>i}$ and $\tilde a:=a+\fa^{>i}\in\tilde \fa$ for $a\in \fa$. 
Then $\tilde e_i$ is a primitive idempotent in $\tilde \fa$ since 
$\End_{\tilde \fa}(\tilde \fa \tilde e_i)\cong \tilde e_i \tilde \fa \tilde e_i\cong \k$ is local. So if $e_i=e_i^1+\dots+e_i^r$ is a sum of orthogonal primitive idempotents in $\fa$ then there is exactly one $t$ with $1\leq t\leq r$ and $\tilde e_i= \tilde e_i^t$. We set $e_i':=e_i^t$. Note that  $e_ie_i'=e_i'=e_i'e_i$, hence $e_i'e_j'=0$ for $i\neq j$. 

Let $\Om$ be an upper set of $I$. 
It easily follows that $A(\Om)$, which by Lemma~\ref{LHI} is the ideal of $A$ generated by $\sum_{i\in\Om}e_i$, is also generated by $\sum_{i\in\Om}e_i'$. Similarly, $\fa(\Om)$ is the ideal of $\fa$ generated by $\sum_{i\in\Om}e_i'$.

We have that $\fa^{\geq i}/\fa^{>i}$ is projective as an \(\tilde \fa\)-module,  
$
\fa^{\geq i}/\fa^{>i} = \tilde \fa\tilde e_i\tilde \fa=\tilde \fa\tilde e_i'\tilde \fa
$  and $\tilde e_i' \tilde \fa \tilde e_i'=\tilde e_i \tilde \fa \tilde e_i\cong \k$. So 
\cite[Statement 7]{DlRi} implies that the multiplication map
$$
m: \tilde{\fa}\tilde{e}_i' \otimes_{\k} \tilde{e}_i'\tilde{\fa} \to \tilde{\fa} \tilde{e}_i' \tilde{\fa}
$$
is an isomorphism of $\tilde \fa$-bimodules. By definition, $\tilde \a \tilde e_i'=\tilde \a \tilde e_i$ has $\k$-basis $\{\tilde x\mid x\in X(i)_\0\}$, $\tilde A_\1 \tilde e_i'=(\tilde A \tilde e_i')_\1=(\tilde A \tilde e_i)_\1$ has $\k$-basis $\{\tilde x\mid x\in X(i)_\1\}$, and $\tilde A \tilde e_i'=\tilde A \tilde e_i=\tilde \a \tilde e_i'\oplus \tilde A_\1 \tilde e_i'$ as $\k$-modules. Let 
$$e_*' := 1_A-\sum_{i\in I}e_i'.$$ 
Since $1_A=1_\fa$, we have $e_*'\in\a$. Note that  
\begin{align*}
\tilde{\a}\tilde{e}_i' = \bigoplus_{j\in I\sqcup\{*\}}\tilde{e}_j'\tilde{\a}\tilde{e}_i' %\oplus \cdots \oplus \tilde{e}_\ell' \tilde{\a}\tilde{e}_i' \oplus \tilde{e}_*' \tilde{\a}\tilde e_i',
\quad\text{and}\quad
\tilde{A}_\1\tilde{e}_i' = \bigoplus_{j\in I\sqcup\{*\}}\tilde{e}_j'\tilde{A}_\1\tilde{e}_i'.% \oplus \cdots \oplus \tilde{e}_\ell' \tilde{A}_\1\tilde{e}_i' \oplus \tilde{e}_*'\tilde{A}_\1\tilde{e}_i'.
\end{align*}
Each of the summands above is projective, hence free, as a \(\k\)-module. So there exists a set of elements \(X'(i)=X'(i)_\0\sqcup X'(i)_\1\) such that:
\begin{itemize}
\item \(e_i' \in X(i)_\0\);
\item \(\{\tilde{x} \mid x \in X'(i)_\0\}\) is a \(\k\)-basis for \(\tilde{\a}\tilde{e}_i'\) and \(\{\tilde{x} \mid x \in X'(i)_\1\}\) is a \(\k\)-basis for \(\tilde{A}_\1\tilde{e}_i'\);
\item For all \(x \in X'(i)\), we have \(x=e_j' x e_i'\) for some \(j \in 
I\sqcup\{*\}
\).
\end{itemize}
In similar fashion we may choose a set of elements \(Y'(i)=Y'(i)_\0\sqcup Y'(i)_\1\) such that:
\begin{itemize}
\item \(e_i' \in Y'(i)_\0\);
\item \(\{\tilde{y} \mid y \in Y'(i)_\0\}\) is a \(\k\)-basis for \(\tilde{e}_i'\tilde{\a}\) and \(\{\tilde{y} \mid y \in Y'(i)_\1\}\) is a \(\k\)-basis for \(\tilde{e}_i'\tilde{A}_\1\);
\item For all \(y \in Y'(i)\), we have \(y=e_i' y e_j'\) for some \(j \in 
I\sqcup\{*\}
\).
\end{itemize}

Since $m$ is an isomorphism, \(\{\tilde{x}\tilde{y} \mid x \in X'(i),\ y \in Y'(i)\}\) is a \(\k\)-basis for \(\tilde{A}\tilde{e}_i' \tilde{A} = A^{\geq i}/A^{>i}\) and 
\(\{\tilde{x}\tilde{y} \mid x \in X'(i)_\0,\ y \in Y'(i)_\0\}\) is a \(\k\)-basis for \(\tilde{\a}\tilde{e}_i' \tilde{\a} = \fa^{\geq i}/\fa^{>i}\). Doing this for all $i\in I$, we deduce that 
\(\{xy \mid i \in I,\ x \in X'(i), y \in Y'(i)\}\) is a basis for \(A\) and \(\{xy \mid i \in I,\ x \in X'(i)_\0, y \in Y'(i)_\0\}\) is a basis for \(\a\). 
The remaining conditions of Definitions~\ref{DCC} and \ref{D290517} are now easily checked. Thus \(\{I,\ \bigsqcup_i X'(i),\ \bigsqcup_i Y'(i)\}\) is an $\a$-conforming heredity data for \(A\). 
\end{proof}

In Lemma~\ref{makeprim}, we have obtained the condition that all the heredity ideals \(A(\Omega)\) are the same for the two heredity bases coming from $(I,X,Y)$ and $(I,X',Y')$. This implies that the standard modules $\De_A(i)$ and hence the simple modules $L_A(i)$ are unchanged when we pass from $(I,X,Y)$ and $(I,X',Y')$. The similar statement holds for $\De_\fa(i)$ and $L_\fa(i)$. 

For a strongly adapted idempotent $e\in A$, recall the notation $\bar X(i),\bar Y(i)$ from (\ref{E230517}),\,(\ref{E200717}). These will be applied for the idempotent $f$ appearing in the following theorem:

\begin{Theorem}\label{MorBasic}
Let $\k$ be local and \(A\) be a unital based quasi-hereditary graded \(\k\)-superalgebra with $\fa$-conforming heredity data $(I,X,Y)$ for a unital subalgebra $\a$. Then there exists an $\fa$-conforming heredity data $(I,X',Y')$ with the same ideals $A(\Om)$ and $\fa(\Om)$ and such that the new initial elements $\{e'_i \mid i \in I\}$ are primitive idempotents in \(\mathfrak{a}\)  satisfying $e_ie_i'=e_i'=e_i'e_i$ 
and $e_i'\equiv e_i\pmod{\a^{>i}}$ for all $i\in I$. Moreover, setting $f:=\sum_{i\in I}e_i'$, we have:
\begin{enumerate}
\item[{\rm (i)}] $f$ is strongly adapted with respect to $(I,X',Y')$, so that $\bar A$ is based quasi-hereditary with heredity data $(I,\bar X',\bar Y')$. 
\item[{\rm (ii)}] $(I,\bar X',\bar Y')$ is $\bar\fa$-conforming;
\item[{\rm (iii)}] $\bar \a$ is basic and if \(A_{\overline{1}} \subset J(A)\) then \(\bar{A}\) is a basic as well;
\item[{\rm (iv)}] The functors
$$
\funF_A:\mod{A}\to\mod{\bar A},\ V\mapsto f V
\quad\text{and}\quad 
\funF_\mathfrak{a}:\mod{\mathfrak{a}}\to\mod{\bar{\mathfrak{a}}},\ V\mapsto fV
$$ 
are equivalences of categories, such that 
\begin{align*}
&\funF_A(L_A(i))\cong L_{\bar{A}}(i), &\funF_A(\De_A(i))\cong \De_{\bar{A}}(i),\\
 &\funF_\mathfrak{a}(L_\mathfrak{a}(i))\cong L_{\bar{\mathfrak{a}}}(i), 
 &\funF_\mathfrak{a}(\De_\mathfrak{a}(i))\cong \De_{\bar{\mathfrak{a}}}(i).
\end{align*}
\end{enumerate}
\end{Theorem}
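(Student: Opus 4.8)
The existence of the new $\fa$-conforming heredity data $(I,X',Y')$ with primitive initial elements $e_i'\in\fa$ satisfying $e_ie_i'=e_i'=e_i'e_i$ and $e_i'\equiv e_i\pmod{\fa^{>i}}$, and with the same ideals $A(\Om)$, $\fa(\Om)$, is exactly Lemma~\ref{makeprim}; I would invoke it and keep its notation, putting $f=\sum_{i\in I}e_i'$. Since the two heredity data have the same ideals, they have the same standard and simple modules $\De_A(i),L_A(i),\De_\fa(i),L_\fa(i)$, so I may work with $(I,X',Y')$ throughout.

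For (i) and (ii) the plan is to verify that $f$ is strongly adapted --- for $A$ with respect to $(I,X',Y')$, and for $\fa$ with respect to $(I,X'_\0,Y'_\0)$ --- and then apply Lemma~\ref{L200517_4}(iii). For $x\in X'(i)$, Lemma~\ref{makeprim} gives $x=e_j'xe_i'$ for a unique $j\in I\sqcup\{*\}$; as the $e_k'$ $(k\in I)$ are pairwise orthogonal idempotents with $e_k'e_*'=0$, this yields $fx=x$ if $j\in I$ and $fx=0$ if $j=*$, and symmetrically $yf\in\{y,0\}$ for $y\in Y'(i)$. So $f$ is adapted with $\bar X'(i)=\{x\in X'(i)\mid fx=x\}$ and $\bar Y'(i)=\{y\in Y'(i)\mid yf=y\}$; since $e_i'=e_i'e_i'e_i'\in\bar X'(i)\cap\bar Y'(i)$ we get $\bar I=I$, and $fe_i'=e_i'=e_i'f$ makes $f$ strongly adapted, so Lemma~\ref{L200517_4}(iii) gives (i). The identical argument for $\fa$ (with $X'(i)$ replaced by $X'(i)_\0$, and using $\bar X'(i)\cap A_\0=\{x\in X'(i)_\0\mid fx=x\}$) shows via Lemma~\ref{L200517_4}(iii) that $\bar\fa=f\fa f$ is based quasi-hereditary with heredity data $(I,(\bar X')_\0,(\bar Y')_\0)$, which is precisely the statement that $(I,\bar X',\bar Y')$ is $\bar\fa$-conforming, giving (ii).

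For (iv) I would note that the ideal of $A$ generated by $\{e_i'\mid i\in I\}$ is $A(I)=\spa B=A$ (Lemma~\ref{LHI} and Lemma~\ref{makeprim}) and $e_i'=e_i'fe_i'$, so $AfA=A$; likewise $\fa f\fa=\fa$. Thus $f$ is a full homogeneous idempotent in $A$ and in $\fa$, so $\funF_A,\funF_\fa$ are equivalences of categories by standard Morita theory. Checking $\bar A(\Om)=fA(\Om)f$ from the explicit basis $\bar B$ of $\bar A$, one gets $f\De_A(i)=(\bar A/\bar A(I^{>i}))\bar e_i'=\De_{\bar A}(i)$ as $\bar A$-modules, and likewise $f\De_\fa(i)=\De_{\bar\fa}(i)$; since $e_i'v_i=v_i$ with $\hat v_i\ne0$ (as $(v_i,w_i)_i=1$), $\funF_A(L_A(i))=fL_A(i)$ is a nonzero simple quotient of $\De_{\bar A}(i)$, hence $\cong L_{\bar A}(i)$, and similarly for $\fa$.

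For (iii) I would argue over $F=\k/\m$, writing $A_F=A/\m A$, $\fa_F=\fa/\m\fa$. First, $\bar\fa$ is basic: by (iv) it is enough that $\dim_F fL_{\fa_F}(i)=\dim_F\bigoplus_{j\in I}e_j'L_{\fa_F}(i)=1$. The image of $e_i'$ in $\fa_F$ is primitive ($e_i'\fa e_i'$ is local, since $e_i'$ is primitive and $\fa$ is semiperfect, hence so is its quotient $e_i'\fa_Fe_i'$), so $\fa_Fe_i'$ has simple head; using $e_i'\equiv e_i$ mod $\fa^{>i}$ and that $\fa^{>i}$ annihilates $L_\fa(i)$ we get $e_i'L_{\fa_F}(i)=e_iL_{\fa_F}(i)=F\hat v_i\ne0$, forcing $\head(\fa_Fe_i')\cong L_{\fa_F}(i)$, whence $e_j'L_{\fa_F}(i)=\Hom_{\fa_F}(\fa_Fe_j',L_{\fa_F}(i))$ is $0$ for $j\ne i$ and $F\hat v_i$ for $j=i$. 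Next assume $A_\1\subseteq J(A)$, hence $(A_F)_\1\subseteq J(A_F)$: then $(A_F)_\1 L_{A_F}(i)=0$, so $L_{A_F}(i)_\0$ is an $A_F$-submodule containing the nonzero image of $v_{e_i'}\in\De_{A_F}(i)_\0$, and by simplicity $L_{A_F}(i)$ is concentrated in even super-degree; via the homogeneous form $(\cdot,\cdot)_i$ this forces $f_i(y,x)=0$ for $x\in X'(i)_\1$, so $L_{A_F}(i)=\De_{A_F}(i)_\0/\rad=\De_{\fa_F}(i)/\rad=L_{\fa_F}(i)$ as $\fa_F$-modules (the two forms agreeing because $\fa^{>i}=\fa\cap A^{>i}$), and hence $\dim_F L_{\bar A/\m\bar A}(i)=\dim_F fL_{A_F}(i)=\dim_F fL_{\fa_F}(i)=1$. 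I expect this last case to be the main obstacle: it needs control of the super-degree of $L_A(i)$, a comparison of the contravariant forms on $\De_A(i)$ and $\De_\fa(i)$, and care that primitivity of the $e_i'$ is inherited by $\fa/\m\fa$.
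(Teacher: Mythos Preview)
Your argument is correct and rests on the same key input as the paper's, namely Lemma~\ref{makeprim}, and your verification of (i)--(iv) via Lemma~\ref{L200517_4}(iii), the full-idempotent Morita argument, and the form comparison is sound. The paper's route differs only organizationally: it first truncates by $e=\sum_i e_i$ using Lemma~\ref{L221217}, applies Lemma~\ref{makeprim} inside $eAe$ to obtain $(I,X'',Y'')$, and then explicitly extends back to $A$ by adjoining the elements $xe_i''$ (resp.\ $e_i''y$) for those $x\in X(i)$ with $ex=0$ (resp.\ $ye=0$); you instead invoke Lemma~\ref{makeprim} directly on $A$ and read off the needed property $x=e_j'xe_i'$ from its proof (you could equally derive $fx\in\{x,0\}$ from axiom~(c) and the orthogonality of the $e_j'$, avoiding that reliance). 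The paper then declares (i)--(iv) ``easy to see''; your paragraph on (iii)---primitivity of $e_i'$ surviving to $\fa_F$, the identification $\De_{A_F}(i)_{\0}\cong\De_{\fa_F}(i)$ with matching forms via $\fa^{>i}=\fa\cap A^{>i}$, and the parity argument from $A_{\1}\subseteq J(A)$---supplies exactly the content the paper suppresses.
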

\begin{proof}
Let $e=\sum_{i\in I}e_i$. By Lemma~\ref{L221217}, the algebra $eAe$ satisfies the assumptions of Lemma~\ref{makeprim}. The application of that lemma yields a conforming heredity data $(I,X'',Y'')$ in $eAe$ with initial elements $\{e_i''\mid i\in I\}$. To extend it to the needed heredity data $(I,X',Y')$ for $A$ define
\begin{align*}
X'&:=X''\sqcup \bigsqcup_{i\in I}\{xe_i''\mid x\in X(i)\ \text{with}\ ex=0\},
\\
Y'&:=Y''\sqcup \bigsqcup_{i\in I}\{e_i''y\mid y\in Y(i)\ \text{with}\ ye=0\}.
\end{align*}
It is easy to see that this new heredity data with initial elements $e_i'=e_i''$ 
satisfies the required conditions. 
\end{proof}
%%%%%%%%%%%%%%%%ROB--END%%%%%%%%%%%%%%%%

\subsection{Examples}\label{SSZig}

Our two main examples of based quasi-hereditary algebras are the classical {\em Schur algebra} $S(n,d)$  and the {\em extended zigzag algebra} $\EZig$. 

The classical Schur algebra with trivial grading and superalgebra structures has the basis $\{Y^\la_{S,T}\}$  of codeterminants constructed in \cite{GreenCod}. It is essentially checked in \cite{GreenCod} that $S(n,d)$ with the codeterminant basis is a  based quasi-hereditary algebra with perfect heredity data and standard anti-involution. So is the extended zigzag algebra, which we define next. 

Given \(n \geq d\), let \(\la = (1^d)\), and  let \(T^\la\) be the \(\la\)-tableau with the entry \(r\) in the \(r\)th row. Define
\begin{align*}
e:= Y^{\la}_{T^\la, T^\la} = \xi_{1 \cdots d, 1 \cdots d}.
\end{align*}
Then \(e\) is an adapted idempotent, and \(eS(n,d)e \cong \k \mathfrak{S}_d\). Thus
\begin{align*}
\{ eY^{\la}_{S,T}e \mid eY^{\la}_{S,T}e \neq 0\}
 \end{align*}
 defines a cellular basis for \(\mathfrak{S}_d\), known as a {\em Murphy basis}.

Fix $\ell\geq 1$ and set 
$$
I:=\{0,1,\dots,\ell\},\quad J:=\{0,\dots,\ell-1\}.
$$

Let $\Gamma$ be the quiver with vertex set $I$ and arrows $\{a_{j,j+1},a_{j+1,j}\mid j\in J\}$ as in the picture: 
\begin{align*}
\begin{braid}\tikzset{baseline=3mm}
\coordinate (0) at (-4,0);
\coordinate (1) at (0,0);
\coordinate (2) at (4,0);
\coordinate (3) at (8,0);
%\coordinate (4) at (12,0);
\coordinate (6) at (12,0);
\coordinate (L1) at (16,0);
\coordinate (L) at (20,0);
\draw [thin, black,->,shorten <= 0.1cm, shorten >= 0.1cm]   (0) to[distance=1.5cm,out=100, in=100] (1);
\draw [thin,black,->,shorten <= 0.25cm, shorten >= 0.1cm]   (1) to[distance=1.5cm,out=-100, in=-80] (0);
\draw [thin, black,->,shorten <= 0.1cm, shorten >= 0.1cm]   (1) to[distance=1.5cm,out=100, in=100] (2);
\draw [thin,black,->,shorten <= 0.25cm, shorten >= 0.1cm]   (2) to[distance=1.5cm,out=-100, in=-80] (1);
\draw [thin,black,->,shorten <= 0.25cm, shorten >= 0.1cm]   (2) to[distance=1.5cm,out=80, in=100] (3);
\draw [thin,black,->,shorten <= 0.25cm, shorten >= 0.1cm]   (3) to[distance=1.5cm,out=-100, in=-80] (2);
%\draw [thin,black,->,shorten <= 0.25cm, shorten >= 0.1cm]   (3) to[distance=1.5cm,out=80, in=100] (4);
%\draw [thin,black,->,shorten <= 0.25cm, shorten >= 0.1cm]   (4) to[distance=1.5cm,out=-100, in=-80] (3);
\draw [thin,black,->,shorten <= 0.25cm, shorten >= 0.1cm]   (6) to[distance=1.5cm,out=80, in=100] (L1);
\draw [thin,black,->,shorten <= 0.25cm, shorten >= 0.1cm]   (L1) to[distance=1.5cm,out=-100, in=-80] (6);
\draw [thin,black,->,shorten <= 0.25cm, shorten >= 0.1cm]   (L1) to[distance=1.5cm,out=80, in=100] (L);
\draw [thin,black,->,shorten <= 0.1cm, shorten >= 0.1cm]   (L) to[distance=1.5cm,out=-100, in=-100] (L1);
\blackdot(-4,0);
\blackdot(0,0);
\blackdot(4,0);
%\blackdot(8,0);
\blackdot(16,0);
\blackdot(20,0);
\draw(-4,0) node[left]{$0$};
\draw(0,0) node[left]{$1$};
\draw(4,0) node[left]{$2$};
%\draw(8,0) node[left]{$3$};
\draw(10,0) node {$\cdots$};
\draw(13.4,0) node[right]{$\ell-1$};
\draw(18.65,0) node[right]{$\ell$};
\draw(-2,1.2) node[above]{$ a_{1,0}$};
\draw(2,1.2) node[above]{$ a_{2,1}$};
\draw(6,1.2) node[above]{$ a_{3,2}$};
%\draw(10,1.2) node[above]{$ a_{4,3}$};
\draw(14,1.2) node[above]{$ a_{\ell-2,\ell-1}$};
\draw(18,1.2) node[above]{$ a_{\ell,\ell-1}$};
\draw(-2,-1.2) node[below]{$ a_{0,1}$};
\draw(2,-1.2) node[below]{$ a_{1,2}$};
\draw(6,-1.2) node[below]{$ a_{2,3}$};
%\draw(10,-1.2) node[below]{$ a_{3,4}$};
\draw(14,-1.2) node[below]{$ a_{\ell-2,\ell-1}$};
\draw(18,-1.2) node[below]{$ a_{\ell-1,\ell}$};
\end{braid}
\end{align*}

The {\em extended zigzag algebra $\EZig$} is the path algebra $\k\Gamma$ modulo the following relations:
\begin{enumerate}
\item All paths of length three or greater are zero.
\item All paths of length two that are not cycles are zero.
\item All length-two cycles based at the same vertex are equivalent.
\item $ a_{\ell,\ell-1} a_{\ell-1,\ell}=0$.
\end{enumerate}
Length zero paths yield the standard idempotents $\{ e_i\mid i\in I\}$ with $ e_i  a_{i,j} e_j= a_{i,j}$ for all admissible $i,j$. The algebra $\EZig$ is graded by the path length: 
$\EZig=\EZig^0\oplus \EZig^1\oplus \EZig^2.
$ 
We also consider $\EZig$ as a superalgebra with 
$\EZig_\0=\EZig^0\oplus \EZig^2\quad \text{and}\quad \EZig_\1=\EZig^1.
$ 

Define
$$
 c_j:= a_{j,j+1} a_{j+1,j} \qquad (j\in J).
$$
The algebra $\EZig$ has an anti-involution $\tau$ with
$$
\tau( e_i)=  e_i,\quad  \tau(a_{ij})=  a_{ji},\quad  \tau(c_j) = c_j.
$$

We consider the total order on $I$
 given by $0<1<\dots<\ell$. For $i\in I$, we set 
 $$X(i):=
 \left\{
\begin{array}{ll}
\{e_i,a_{i-1,i}\}   &\hbox{if $i>0$,}\\
\{e_0\}  &\hbox{if $i=0$,}
 \end{array}
 \right.
 \quad
 Y(i):=
 \left\{
\begin{array}{ll}
\{e_i,a_{i,i-1}\}   &\hbox{if $i>0$,}\\
\{e_0\}  &\hbox{if $i=0$.}
 \end{array}
 \right.
 $$
With respect to this data we have:

\begin{Lemma} \label{LAQH} %{\rm \cite{}}%{\bf ()}
The graded superalgebra $\EZig$ is a basic based quasi-hereditary with perfect heredity data and standard anti-involution $\tau$.\end{Lemma}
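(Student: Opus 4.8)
The plan is to verify the axioms of Definition~\ref{DCC} directly for $\EZig$ with the data $(I,X,Y)$ specified above, and then separately check perfectness of the heredity data, standardness of $\tau$, and the basic property. First I would write down explicitly the set $B=\{xy\mid (x,y)\in Z\}$: from the definition of $X(i),Y(i)$ we get $Z(0)=\{(e_0,e_0)\}$ and, for $i>0$, $Z(i)=\{(e_i,e_i),(e_i,a_{i,i-1}),(a_{i-1,i},e_i),(a_{i-1,i},a_{i,i-1})\}$, so that
\begin{equation*}
B=\{e_i\mid i\in I\}\cup\{a_{i,i-1},\,a_{i-1,i}\mid i>0\}\cup\{a_{i-1,i}a_{i,i-1}\mid i>0\}.
\end{equation*}
Now $a_{i-1,i}a_{i,i-1}=c_{i-1}$ is the length-two cycle based at vertex $i-1$, and using the relations (all paths of length $\geq 3$ vanish, length-two non-cycles vanish, length-two cycles at a fixed vertex are all equal, and $a_{\ell,\ell-1}a_{\ell-1,\ell}=0$) one sees that the paths $e_i$ $(0\le i\le\ell)$, $a_{i,i-1}$, $a_{i-1,i}$ $(1\le i\le\ell)$ and $c_j$ $(0\le j\le\ell-1)$ form a $\k$-basis of $\EZig$ — this is exactly $B$ (note $c_{i-1}$ appears once, indexed by $i>0$, and $c_{\ell-1}=a_{\ell-1,\ell}a_{\ell,\ell-1}$ is \emph{not} forced to be zero; only $a_{\ell,\ell-1}a_{\ell-1,\ell}$ is). Thus axiom (a) holds, and along the way one identifies $A^{>i}=A(Z^{>i})=\spa\{e_j,a_{j,j-1},a_{j-1,j},c_{j-1}\mid j>i\}$.

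Next I would check axiom (c), which is entirely bookkeeping with the idempotents: $e_i e_i=e_i$, $e_i a_{i-1,i}=a_{i-1,i}$ and $a_{i-1,i} e_i=a_{i-1,i}$ are immediate from $e_{i-1}a_{i-1,i}e_i=a_{i-1,i}$, while $e_i x=0$ for $x\in X(i)\setminus\{e_i\}$ holds because $e_i a_{i-1,i}=0$ (wrong source idempotent), and symmetrically for $Y(i)$; and for arbitrary $j$, $e_j$ times any element of $X$ is that element or zero depending on matching of idempotents. Then axiom (b): given $a\in\EZig$, one computes $a e_i$, $a a_{i-1,i}$ modulo $A^{>i}$. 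Since left multiplication by a path either produces a path of strictly greater length (which lands in $A^{>i}$ once it is long enough, or is a cycle $c_{i-1}$, or is killed) or reproduces an element of $X(i)$ up to scalar, the congruences of (b) hold; the key point is that multiplying $X(i)$-elements on the left stays within the span of $X(i)$ modulo higher terms, which is visible from the quiver since any arrow into vertex $i$ or $i-1$ composed appropriately is controlled. This establishes that $(I,X,Y)$ is a heredity data, so $\EZig$ is based quasi-hereditary.

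For the remaining assertions: \emph{perfect} heredity data means (in the terminology following \eqref{E121217_3}) that the $\k$-span of the $f_i(y,x)$, $x\in X(i)$, $y\in Y(i)$, is all of $\k$ — this already follows from $f_i(e_i,e_i)=1$ by \eqref{E121217_3}, so perfectness is automatic. For the \emph{standard anti-involution}: define $y(e_i):=e_i$ and $y(a_{i-1,i}):=a_{i,i-1}$, giving bijections $X(i)\iso Y(i)$; then $\tau(e_i)=e_i$ and $\tau(a_{i-1,i})=a_{i,i-1}$ show $\tau(x)=y(x)$ as required by \eqref{E200517}, so $\tau$ is standard (and $\tau$ is indeed a well-defined homogeneous anti-involution since it respects the defining relations and the grading). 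Finally, \emph{basic}: over any field $F$, $L(i)=\De(i)/\rad\De(i)$ and $\De(i)$ has $\k$-basis $\{\tilde x\mid x\in X(i)\}$ of size $1$ (for $i=0$) or $2$ (for $i>0$); one checks the radical of the form $(\cdot,\cdot)_i$ is spanned by $\widetilde{a_{i-1,i}}$ when $i>0$ (since $f_i(e_i,a_{i-1,i})$ and $f_i(a_{i,i-1},a_{i-1,i})$ vanish modulo $A^{>i}$ because $a_{i,i-1}a_{i-1,i}=c_i\notin\k e_i+A^{>i}$ — indeed $c_i\in A^{>i}$ only fails at... let me instead argue directly that $\dim L(i)=1$), so $L(i)$ is $1$-dimensional over $F$ for every $i$, i.e. $\EZig$ is basic. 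The main obstacle I anticipate is the last point: one must be careful computing $\rad\De(i)$ to confirm it has codimension exactly $1$, which amounts to checking $y x\in A^{>i}$ for all $y\in Y(i)$, $x\in X(i)$ with $(y,x)\ne(e_i,e_i)$; this uses that $a_{i,i-1}a_{i-1,i}=c_i$ lies in $A^{>i}$ when $i<\ell$ (a length-two cycle at vertex $i$, which does appear in the basis indexed by $i+1>i$) and equals $0$ when $i=\ell$, so in either case it is $\equiv 0\pmod{A^{>i}}$ — hence $f_i(a_{i,i-1},a_{i-1,i})=0$, and together with $f_i(e_i,a_{i-1,i})=0$ this forces $\widetilde{a_{i-1,i}}\in\rad\De(i)$, giving $\dim_F L(i)=1$. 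All the other verifications are routine quiver-algebra computations.
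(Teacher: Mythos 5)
The paper's own proof is literally ``This is well-known and easy to check,'' and your proposal supplies exactly that check: the identification of $B$ with the path basis $\{e_i\}\cup\{a_{i,i\pm1}\}\cup\{c_j\}$, the verification of Definition~\ref{DCC}(a)--(c), the standardness of $\tau$ via $\tau(a_{i-1,i})=a_{i,i-1}\in Y(i)$, and the basicness computation (the Gram matrix of $(\cdot,\cdot)_i$ is $\mathrm{diag}(1,0)$ since $e_ia_{i-1,i}=a_{i,i-1}e_i=0$ and $a_{i,i-1}a_{i-1,i}$ is either $c_i\in A(Z(i+1))\subseteq A^{>i}$ or, for $i=\ell$, zero, so $\rad\De(i)=\k v_{a_{i-1,i}}$) are all correct. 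The one caveat is the clause about \emph{perfect} heredity data: that term is never defined in the text, and your reading of it as ``standardly full-based'' would make the clause vacuous, since the paper already observes that \emph{every} based quasi-hereditary algebra is standardly full-based by \eqref{E121217_3}; so this part of your argument is an interpolation rather than a verification, and the intended meaning of ``perfect'' (presumably defined in the authors' companion work) may require a separate check.
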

\begin{proof}
This is well-known and easy to check.
\end{proof}

Note that 
$$B_\1=\{ a_{j,j+1}, a_{j+1,j}\mid j\in J\},\ \ B_{\0}=\{ e_i\mid i\in I\}\sqcup \{ c_j\mid j\in J\}.
$$

%%%%%%%%%%%%%%%%ROB--START%%%%%%%%%%%%%%%%

Let \(e:= e_0 + \cdots + e_{\ell-1} \in Z\). Note that \(e\) is an adapted idempotent, and \(\tau(e) = e\) so the {\em zigzag algebra} \(\overline{Z}:=e Z e \subset Z\) is a cellular algebra with involution \(\tau|_{\overline{Z}}\), and cellular basis
\begin{align*}
\overline{B} = \{xy\mid i \in I, x \in \overline{X}(i), y \in \overline{Y}(i)\},
\end{align*}
where \(\overline{X}(\ell) = \{a_{\ell-1,\ell}\}\), \(\overline{Y}(\ell)=\{a_{\ell,\ell-1}\}\), and \(\overline{X}(i) = X(i)\), \(\overline{Y}(i) = Y(i)\) for all \(i \in J\).

Note that, when \(\k\) is a field, we have \(eL(\ell) = 0\), and \(eL(j) = L(j)\) for all \(j \in J\), so the standard \(\overline{\EZig}\)-modules are \(\{\bar{\Delta}(i) = e\Delta(i) \mid i \in I\}\), and the simple \(\overline{\EZig}\)-modules are \(\{\bar{L}(j) = eL(j) \mid j \in J\}\). The following lemma is easily checked.

\begin{Lemma}\label{zigdecomp} Let \(\k\) be a field. Let \(i \in I\), and \(j \in I\) (resp. \(j \in J\)). Then the graded decomposition numbers for standard \(\EZig\)-modules (resp. \(\overline{\EZig}\)-modules) are given by
\begin{align*}
d_{i,j} = 
\delta_{i,j} +\delta_{i-1,j} q \pi.
\end{align*}
\end{Lemma}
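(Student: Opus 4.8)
The plan is to compute the decomposition numbers directly from the definition $d_{ij}(q,\pi) = \sum_{n,\eps}[\Delta(i):q^n\pi^\eps L(j)]q^n\pi^\eps$, using that everything here is completely explicit. First I would record the standard modules. Since the heredity data is $X(i) = \{e_i, a_{i-1,i}\}$, $Y(i) = \{e_i, a_{i,i-1}\}$ for $i>0$ and $X(0)=Y(0)=\{e_0\}$, the module $\Delta(i) = \tilde A \tilde e_i$ has $\k$-basis $\{v_{e_i}\} = \{v_i\}$ when $i = 0$, and $\{v_i, v_{a_{i-1,i}}\}$ when $i > 0$; note $\deg(v_i) = 1$ and $\deg(v_{a_{i-1,i}}) = q\pi$ since $a_{i-1,i}$ has path-length $1$, hence sits in $\EZig^1 = \EZig_{\bar 1}$. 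Over a field $\k$, the canonical irreducible $L(j)$ is $\Delta(j)/\rad\Delta(j)$, and the key computation is the Gram form $f_j(y,x)$ on $Y(j)\times X(j)$: using (\ref{E210617}), $f_j(e_j,e_j)=1$, while $f_j(a_{j,j-1}, a_{j-1,j})$ is the coefficient of $e_j$ in the product $a_{j,j-1}a_{j-1,j}$ modulo $A^{>j}$. But $a_{j,j-1}a_{j-1,j}$ is a length-two path that is \emph{not} a cycle based at $j$ — wait, it \emph{is} a cycle based at $j-1$; more to the point $a_{j,j-1}a_{j-1,j} = c_{j-1}$-type element based at $j-1$, hence lies in $e_{j-1}Ae_{j-1}$, which contributes nothing to the $e_j$-coefficient. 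Thus the off-diagonal Gram entries vanish and $\rad\Delta(j)$ is spanned by $v_{a_{j-1,j}}$ (for $j>0$), so $L(j)$ is $1$-dimensional, concentrated in degree $1$ — consistent with $\EZig$ being basic (Lemma~\ref{LAQH}).

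Next I would read off the composition factors of $\Delta(i)$. We have $\dim^q_\pi \Delta(i) = 1 + q\pi$ for $i>0$ and $\dim^q_\pi\Delta(0) = 1$. The top of $\Delta(i)$ is $L(i)$ (generated by $v_i$), contributing $\delta_{ij}$. The one-dimensional submodule $\k v_{a_{i-1,i}}$ (in degree $q\pi$) is annihilated by all $e_k$ except $k = i-1$ — indeed $e_{i-1} a_{i-1,i} = a_{i-1,i}$ while $e_k a_{i-1,i} = 0$ for $k\neq i-1$ by the quiver structure — so this submodule is $\cong q\pi L(i-1)$, contributing $\delta_{i-1,j}q\pi$. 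That exhausts a $2$-dimensional module, giving $d_{ij} = \delta_{ij} + \delta_{i-1,j}q\pi$ for the algebra $\EZig$, and $d_{0j} = \delta_{0j}$, which is the asserted formula (the $\delta_{i-1,j}$ term automatically vanishes when $i=0$ since $-1\notin I$).

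For the truncated algebra $\overline{\EZig} = e\EZig e$ with $e = e_0+\dots+e_{\ell-1}$, the remark preceding the lemma already records $\bar\Delta(i) = e\Delta(i)$ and that $eL(j) = L(j)$ for $j\in J$ while $eL(\ell) = 0$. So $\bar\Delta(i)$ has the same composition series as $\Delta(i)$ \emph{except} that any factor $L(\ell)$ is killed; but $L(\ell)$ appears in $\Delta(i)$ only as the top of $\Delta(\ell)$ itself, and $\bar\Delta(\ell) = e\Delta(\ell) = \k\, e a_{\ell-1,\ell} \cong q\pi \bar L(\ell-1)$ is $1$-dimensional. For $i\in J$ we have $e\Delta(i) = \Delta(i)$ verbatim since $e_\ell$ acts as zero on it already. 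Hence for $j\in J$, $\bar d_{ij} = d_{ij} = \delta_{ij} + \delta_{i-1,j}q\pi$ when $i\in J$, and $\bar d_{\ell,j} = \delta_{\ell-1,j}q\pi = \delta_{\ell,j} + \delta_{\ell-1,j}q\pi$ restricted to $j\in J$ (the $\delta_{\ell,j}$ term being vacuous). This is again the stated formula. The only genuinely delicate point — and the one I would be most careful about — is the vanishing of the off-diagonal Gram pairing $f_j(a_{j,j-1},a_{j-1,j})$, i.e. verifying that $a_{j,j-1}a_{j-1,j}$ contributes nothing modulo $A^{>j}$; everything else is bookkeeping with the explicit quiver relations. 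I would double-check this against relation~(4) ($a_{\ell,\ell-1}a_{\ell-1,\ell}=0$) and the cycle relations, and confirm it is consistent with $L(j)$ being one-dimensional as forced by Lemma~\ref{LAQH}. Given how explicit the setup is, "easily checked" is accurate, so I would keep the written proof short, perhaps just displaying $\dim^q_\pi\Delta(i)$ and the submodule structure.
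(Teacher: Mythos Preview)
Your approach is correct and is exactly the explicit computation the paper leaves to the reader (the paper's own proof is literally ``easily checked''). There is, however, one slip in your justification of the key vanishing: with the convention $e_i a_{i,j} e_j = a_{i,j}$, the product $a_{j,j-1}a_{j-1,j}$ lies in $e_j A e_j$, not $e_{j-1}Ae_{j-1}$ --- it is the length-two cycle based at $j$ passing through $j-1$. The correct reason it vanishes modulo $A^{>j}$ is that, by relation (3), this cycle equals $c_j = a_{j,j+1}a_{j+1,j}$ for $0<j<\ell$, and $c_j = a_{j,j+1}a_{j+1,j}$ is a heredity-basis element coming from $Z(j+1)$, hence lies in $A^{>j}$; for $j=\ell$ the product is zero by relation (4). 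Your conclusion $f_j(a_{j,j-1},a_{j-1,j})=0$, and hence the rest of the argument, is unaffected.
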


%{\em Matrix superalgebras.} 
For integers \(n,m\), consider the matrix superalgebra \(M_{n|m}(\k)\) of rank \(n|m\), with entries in \(\k\). For \(r,s \in [1,n+m]\), let \(E_{r,s}\) be the matrix with \(1\) in the \((r,s)\)-th component, and zeros elsewhere. We have
\begin{align*}
\overline{E}_{r,s} := 
\begin{cases}
\bar 0 & \textup{if }r,s \leq n \textup{ or } r,s >n,\\
\bar 1 & \textup{otherwise}.
\end{cases}
\end{align*}
and
\begin{align*}
\textup{deg}(E_{r,s}) = r-s.
\end{align*}
Then \(B:=\{E_{r,s} \mid r,s \in [1,n+m]\}\) constitutes a homogeneous basis for \(M_{n|m}(\k)\).

Now, let \(I = \{\bullet\}\) be the singleton set, and define:
\begin{align*}
e_\bullet:=E_{1,1}, \hspace{5mm} X(\bullet):=\{E_{r,1} \mid r \in [1,n+m]\}, \hspace{5mm} Y(\bullet):=\{E_{1,s} \mid s \in [1, n+m]\}.
\end{align*}
Then \((I,X,Y)\) constitutes conforming heredity data for \(M_{n|m}(\k)\) with heredity basis \(B\).

\end{document}